\documentclass[reqno,11pt]{amsart}

\allowdisplaybreaks

\usepackage[truedimen,margin=25truemm]{geometry}
\usepackage{amsmath,amsfonts,amssymb,graphicx,amsthm,color,yfonts,cite, latexsym}
\usepackage{paralist}
\usepackage{mathrsfs}
\usepackage{hyperref}
\usepackage{physics}
\usepackage{bbm}
\usepackage{extarrows}
\usepackage{bm}
\usepackage{mathtools}
\mathtoolsset{showonlyrefs}

\usepackage{tikz}
\usepackage{tikz-cd}
\usetikzlibrary{cd}
\usetikzlibrary{arrows.meta}
\usetikzlibrary{matrix, arrows, positioning}
\usetikzlibrary{shadows}

\newtheorem{theorem}{Theorem}
\newtheorem{definition}[theorem]{Definition}
\newtheorem{proposition}[theorem]{Proposition}

\newtheorem{lemma}[theorem]{Lemma}
\newtheorem{corollary}[theorem]{Corollary}

\newtheorem{remark}[theorem]{Remark}

\makeatletter
\newcommand*{\rom}[1]{\expandafter\@slowromancap\romannumeral #1@}
\makeatother

\newcommand{\ls}{\lesssim}

\newcommand{\R}{\mathbb{R}}

\newcommand{\C}{\mathbb{C}}

\newcommand{\CB}{\mathcal{B}}

\newcommand{\CF}{\mathcal{F}}

\newcommand{\SA}{\mathscr{A}}
\newcommand{\SB}{\mathscr{B}}

\newcommand{\de}{\delta}
\newcommand{\ps}{\psi}
\newcommand{\ph}{\varphi}
\newcommand{\ta}{\tau}
\renewcommand{\th}{\theta}

\newcommand{\et}{\eta}

\newcommand{\Ph}{\Phi}

\newcommand{\pl}{\partial}
\newcommand{\wt}{\widetilde}
\newcommand{\wh}{\widehat}
\newcommand{\loc}{{\rm loc}}

\newcommand{\Ck}[1]{\left\{#1\right\}}

\newcommand{\Dk}[1]{\left[#1\right]}
\newcommand{\K}[1]{\left(#1\right)}

\newcommand{\No}[1]{\left\| #1 \right\|}

\newcommand{\I}{\infty}
\newcommand{\sd}{\langle \pl_x \rangle}

\newcommand{\lxr}{\langle \xi \rangle}

\newcommand{\tw}{\frac{1}{2}}

\newcommand{\ov}{\overline}

\newcommand{\II}{\mathbbm{1}}

\newcommand{\Id}{\textup{Id}}

\renewcommand{\Im}{\operatorname{Im}}

\newcommand{\dH}{\dot{H}}
\newcommand{\HS}{\textup{HS}}

\makeatletter
\def\l@section{\@tocline{1}{0pt}{0pt}{}{}}%

\def\l@subsection{\@tocline{2}{0pt}{2.5em}{}{}}%

\def\l@subsubsection{\@tocline{3}{0pt}{3.0em}{}{}}%

\usepackage{wrapfig}
\usepackage{tikz}
\usetikzlibrary{arrows,calc,decorations.pathreplacing}
\definecolor{light-gray1}{gray}{0.90}
\definecolor{light-gray2}{gray}{0.80}
\definecolor{light-gray3}{gray}{0.60}

\numberwithin{equation}{section}

\numberwithin{theorem}{section}

\numberwithin{table}{section}

\numberwithin{figure}{section}
\title[Well-posedness for nonlocal NLS]{Well-posedness in the full scaling-subcritical range for a class of nonlocal NLS on the line}

\date{\today}

\author[S. Hadama]{Sonae Hadama}
\address{Research Institute for Mathematical Sciences, Kyoto University, Kita-Shirakawa, Sakyo-ku, Kyoto, Japan 606-8502.}
\email{hadama@kurims.kyoto-u.ac.jp}
\subjclass[2020]{Primary 35Q55; Secondary 35A01, 35A02.}
\keywords{Derivative NLS, Nonlocal NLS, Bilinear Strichartz estimate, Dyson series expansion, Local well-posedness, Global well-posedness}
\thanks{The author would like to thank Professor Nobu Kishimoto for many helpful comments and suggestions through valuable discussions.
The author was supported by JSPS KAKENHI Grant Number 24KJ1338.}
\date{}

\begin{document}
\begin{abstract}		
In this paper, we study a class of one-dimensional nonlocal nonlinear Schr\"odinger equations on the line with nonlinearity given by a Fourier multiplier whose symbol has subcritical high-frequency growth. In terms of symbol order, this class is intermediate between the cubic nonlinear Schr\"odinger equation and the Calogero--Moser derivative nonlinear Schrd\"oinger equation. We prove local well-posedness in $L^2(\mathbb{R})$ throughout the full scaling-subcritical range. Due to derivative loss, the standard Duhamel integral is not directly meaningful for rough data. To avoid this problem, we first construct the propagator $S_V$ for rough time-dependent potentials $V$, and then prove an Ozawa-Tsutsumi type bilinear Strichartz estimate for the perturbed flow $S_V$. 
These linear theories yield a concrete construction of rough solutions without using any equation-specific algebraic structure. For real-valued symbols, mass is conserved, and the local solutions are therefore global.
\end{abstract}
	
\maketitle
\tableofcontents
	
\section{Introduction}
In this paper, we consider well-posedness of the ``derivative'' nonlinear Schr\"odinger equation
\begin{equation}\label{eq:NLS}\tag{NLS}
	i\pl_t u + \pl_x^2 u = (a(D)|u|^2)u, \quad u:[0,\I)\times\R_x\to\C,
\end{equation}
where $a(D)$ is the Fourier multiplier with symbol $a:\R \to \C$.
Throughout this paper, we assume 
\begin{equation}\label{eq:assmption for m}
	\boxed{\sup_{\xi \in\R} \frac{|a(\xi)|}{\lxr^{1-\de}} <\I}
\end{equation}
for some $0\le \de \le 1$.
This class includes two important examples as endpoint cases.

On the one hand, when $\de=1$ and $a(\xi)=\pm 1$, then \eqref{eq:NLS} is the standard cubic nonlinear Schr\"odinger equation (cubic NLS).
There is a vast body of literature on the cubic NLS, but the readers can find some basic well-posedness results in \cite{Cazenave textbook}. For long-time behaviors of solutions, see the survey \cite{Murphy 2021} and references therein.

On the other hand, the scaling-critical case when $\de=0$ and $a(\xi)=\pm (|\xi|+\xi)$ corresponds to the Calogero--Moser derivative NLS (CM-DNLS).
For the Cauchy problem on the line with decaying data, previous well-posedness results for CM-DNLS and the related intermediate NLS were obtained in Hardy-type spaces in \cite{Gerard Lenzmann 2024, Killip et al 2025}, and in Sobolev spaces for the related intermediate NLS in \cite{de Moura Pilod 2010, Chapouto et al 2025+}. We also mention related results in other settings, including the periodic problem \cite{Badreddine 2024} and nonvanishing boundary conditions \cite{Akahori et al 2026+, X.Chen 2026}.

Note that previous works on the scaling-critical case $\de=0$ make essential use of equation-specific structures.
In contrast, the standard proof of the well-posedness for cubic NLS in $L^2(\R)$ is flexible; it can be applied for a wide class of nonlinearities, which is typical in the scaling-subcritical regime.

In this paper, we consider the intermediate cases of these two endpoints.
More precisely, we show that one can treat all scaling-subcritical nonlinearities $a(D)(|u|^2)u$ with symbol $a:\R\to \C$ satisfying \eqref{eq:assmption for m} and 
\begin{equation}
	\boxed{0<\de<1}
\end{equation}
in a unified way.
Since we can take $\de<1$ arbitrarily close to $1$, we can regard our result as covering the full scaling-subcritical range.
This result seems natural because of the subcritical assumption; but it is also slightly surprising because the nonlinearity includes some derivative loss. Our analysis is based only on derivative gain and loss considerations, rather than on the special (algebraic) structure of individual models.

Now, we state our main result.

\begin{theorem}\label{th:main}
	Assume that $0< \de <1$ and $a:\R\to \C$ satisfies \eqref{eq:assmption for m}.
	Then, for any $\phi\in L^2(\R)$, there exist $T=T(a,\de,\|\phi\|_{L^2_x(\R)})>0$ and a unique solution $u(t)\in C([0,T];L^2_x(\R))$ to \eqref{eq:NLS} with initial condition $u(0)=\phi$ such that $|u(t)|^2 \in L^2_t([0,T];H^{1/2}_x(\R))$ in the sense of Definition \ref{def:solution} (see also Remark \ref{rmk:def of solution}).
	Moreover, the data-to-solution map is Lipschitz continuous.
	If $a$ is real-valued, then we can extend the local solution to a global one $u(t)\in C([0,\I);L^2_x(\R))$ such that its mass is conserved, that is, we have
	\begin{equation}
		\|u(t)\|_{L^2_x(\R)} = \|\phi\|_{L^2_x(\R)}
	\end{equation}
	for all $t\in [0,\I)$.
\end{theorem}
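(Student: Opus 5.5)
The plan is to follow the strategy announced in the abstract: treat the nonlinearity as a time-dependent potential $V=a(D)|u|^2$ and solve a fixed-point problem for $V$ rather than for $u$. Given $V$ in a suitable space $X_T$ (for instance $L^2_t([0,T];H^{1/2-\de}_x)$, or a space adapted to the output regularity $|u|^2\in L^2_tH^{1/2}_x$ composed with $a(D)$, which loses $1-\de$ derivatives), I would first construct the propagator $S_V(t,s)$ for the linear equation $i\pl_t w+\pl_x^2 w = Vw$ by a Dyson series
\[
S_V(t,0)=\sum_{n\ge0}(-i)^n\int_{0<s_n<\dots<s_1<t} e^{i(t-s_1)\pl_x^2}V(s_1)e^{i(s_1-s_2)\pl_x^2}\cdots V(s_n)e^{is_n\pl_x^2}\,ds .
\]
Each term would be estimated on $L^2$ using that $V$ carries at most $1/2-\de<1/2$ derivatives of loss, which the smoothing and bilinear gain of the free Schr\"odinger flow can absorb. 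The goal is a bound of the form $C^n T^{n\theta}\|V\|_{X_T}^n/(n!)^{c}$, or simply geometric smallness for short $T$. This gives $S_V$ as a unitary-type, strongly continuous family on $L^2$, Lipschitz in $V$.

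Second, I would prove the Ozawa--Tsutsumi bilinear estimate for the perturbed flow:
\[
\| |S_V(\cdot,0)\phi|^2\|_{L^2_tH^{1/2}_x([0,T]\times\R)}\ls C(\|V\|_{X_T})\|\phi\|_{L^2}^2 .
\]
The free version $\| D^{1/2}|e^{it\pl_x^2}\phi|^2\|_{L^2_{t,x}}\ls\|\phi\|_{L^2}^2$ is classical. The perturbed version follows by expanding both factors in the Dyson series and applying a multilinear version of the free estimate term by term, with the same summability as in the first step. I expect this step to be the main obstacle. One must show that the $n$-th order terms keep the full $1/2$ derivative gain uniformly, which needs careful frequency case analysis (high--high versus high--low interactions) and the use of the margin $\de>0$ to gain a power of $T$.

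Third, I would close the fixed point. Define $\Phi(V)=a(D)|S_V(\cdot,0)\phi|^2$. By assumption \eqref{eq:assmption for m}, $a(D)$ maps $H^{1/2}$ into $H^{\de-1/2}$. Combined with the bilinear estimate (with a time factor $T^{\theta}$ obtained from the subcriticality $\de>0$, e.g.\ via H\"older in time after using a slightly stronger estimate), this shows that $\Phi$ maps a ball of $X_T$ into itself for $T=T(a,\de,\|\phi\|_{L^2})$. A difference estimate using the Lipschitz dependence of $S_V$ on $V$ shows that $\Phi$ is a contraction. Then $u=S_V\phi$ is the solution in the sense of Definition \ref{def:solution}. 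Uniqueness in the stated class follows because any solution with $|u|^2\in L^2_tH^{1/2}_x$ yields $V=a(D)|u|^2\in X_T$, and $u=S_V\phi$ by uniqueness for the linear problem. Lipschitz dependence on $\phi$ is obtained from the same estimates.

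Finally, for real $a$, the potential $V$ is real, so $S_V$ is unitary on $L^2$; this can be checked on smooth approximations of $V$ and passed to the limit. Hence $\|u(t)\|_{L^2}=\|\phi\|_{L^2}$. Since $T$ depends only on $\|\phi\|_{L^2}$, iterating the local result gives a global solution in $C([0,\I);L^2_x)$.
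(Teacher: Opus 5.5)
Your architecture is the paper's: a Dyson series for rough $V$, a perturbed Ozawa--Tsutsumi estimate obtained by expanding both factors term by term, and a contraction for $V=a(D)\varrho$, which is equivalent to the paper's contraction for $\varrho$ in $L^2_tH^{1/2}_x$. (The potential space is $L^2_tH^{-1/2+\de}_x$, not $L^2_tH^{1/2-\de}_x$ as you first wrote; you use the right space later.)

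\textbf{The gap is in the mass-conservation step.} For real-valued $a$, the potential $V=a(D)|u|^2$ is in general \emph{not} real. For real $\varrho$, $a(D)\varrho$ is real-valued only when $a(-\xi)=\ov{a(\xi)}$, which for real $a$ means only when $a$ is even. The theorem covers every real $a$, including symbols with an odd part:
\begin{itemize}
\item $a(\xi)=\xi\lxr^{-\de}$, for which $a(D)\varrho=-i\sd^{-\de}\pl_x\varrho$ is purely imaginary;
\item $(|\xi|+\xi)\lxr^{-\de}$, the subcritical analogue of the CM-DNLS symbol.
\end{itemize}
For such $V$ the propagator $S_V$ is not unitary. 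The mass of $w(t)=S_V(t)\phi$ changes at rate $2\int_\R(\Im V)|w|^2\,dx$, which need not vanish for general $\phi$. So ``$V$ real, hence $S_V$ unitary'' proves conservation, and hence globality, only for even $a$.

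What holds instead is a nonlinear cancellation along the self-consistent solution. If $|u|^2=\varrho$ and $V=a(D)\varrho$, then
\[
\int_\R V|u|^2\,dx=\int_\R a(D)\varrho\cdot\varrho\,dx=\int_\R a(\xi)|\wh{\varrho}(\xi)|^2\,d\xi\in\R .
\]
To use this you must justify the mass identity at the rough level, which is what the paper does.
\begin{enumerate}
\item Take smooth $V_j\to V$ in $L^2_tH^{-1/2+\de}_x$ and set $u_j=S_{V_j}(t)\phi$.
\item For these, $\|u_j(t)\|_{L^2}^2=\|\phi\|_{L^2}^2+2\Im\int_0^t\int_\R V_j|u_j|^2\,dx\,d\tau$.
\item Pass to the limit. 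This uses $|u_j|^2\to|u|^2$ in $L^2_tH^{1/2}_x$, which is exactly the difference estimate for the perturbed bilinear Strichartz bound, so the $H^{-1/2+\de}$--$H^{1/2-\de}$ pairing converges.
\item Only after the limit substitute $V=a(D)\varrho$ and $|u|^2=\varrho$.
\end{enumerate}

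\textbf{Smaller remarks.} Your first two steps are only sketched, and the difficulty is not where you expect it. The paper needs no high--low frequency analysis:
\begin{itemize}
\item It bounds the untruncated block $\int_I S(t)^*V(t)S(t)\,dt$ in Hilbert--Schmidt norm by $\||\pl_x|^{-1/2}V\|_{L^2_{t,x}}$, together with an $L^{4/3}_tL^2_x$ Strichartz bound.
\item It applies the free Ozawa--Tsutsumi estimate to $S(t)f\cdot\ov{S(t)g}$, with time-independent $f,g$ given by untruncated Dyson terms.
\item It restores time-ordering with the Christ--Kiselev lemma and its bilinear dual.
\end{itemize}
Christ--Kiselev needs a time exponent below $2$, and this is the essential role of $\de>0$. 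Interpolating the two block bounds gives control by $L^{p_\de}_t\dot{H}^{-1/2+\de}_x$ with $p_\de=2/(1+\de)<2$, and H\"older in time then supplies $T^{\de/2}$.

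Finally, iterating the local result needs a gluing step: concatenated solutions must again be solutions in the sense of Definition~\ref{def:solution}. This follows from the semigroup property $S_V(t,t_0)=S_V(t,t_1)S_V(t_1,t_0)$.
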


\begin{remark}\label{rmk:def of solution}
	See Section \ref{sec:proof} for the precise statement of Theorem \ref{th:main}.
\end{remark}

\begin{remark}
	The definition of the solution to \eqref{eq:NLS} is a delicate problem.
	One of the standard options is to use the Duhamel integral representation 
	\begin{equation}
		u(t) = S(t)\phi - i \int_0^t S(t-\ta)\K{a(D)(|u(\ta)|^2)u(\ta)}d\ta,
	\end{equation}
	where $S(t)=e^{it\pl_x^2}$.
	However, we cannot expect $a(D)|u(t)|^2$ to make sense as a standard function even if $u(t)$ is a free solution.
	Indeed, when $u$ is a free solution, we only know $|u(t)|^2 \in L^2_t H^{1/2}_x$ by the Ozawa--Tsutsumi type Strichartz estimate (see \eqref{eq:bilinear on I}), so we only expect $a(D)|u|^2 \in L^2_t H^{-1/2+\de}_x$ by the assumption \eqref{eq:assmption for m}.
	This is too rough for the standard Duhamel term.
	Our solution is defined through the closed density equation introduced in Section \ref{sec:novelty} and made rigorous in Definition \ref{def:solution}.
\end{remark}

\begin{remark}
	Theorem \ref{th:main} does not include the ``standard'' derivative NLS
	\begin{equation}\label{eq:standard DNLS}
		i\pl_t u + \pl_x^2 u = \pm i\pl_x(|u|^2 u), \quad u:[0,\I)\times\R \to \C
	\end{equation}
	because the nonlinear term includes $|u|^2 \pl_x u$.
	In this paper, we first find $|u|^2$ and then recover the true solution $u$.
	However, if the nonlinear term includes $|u|^2 \pl_x u$, the reduction to $|u|^2$ does not work well.
	There is a body of literature for \eqref{eq:standard DNLS}. For example, see \cite{Harrop-Griffiths et al 2026} and references therein.
\end{remark}

\subsection{Notation}
We give a list of notations used in this article here.
\begin{itemize}
	\item We define the Fourier and inverse Fourier transforms by
	\begin{equation}
		\CF[f](\xi) = \wh{f}(\xi) := \frac{1}{\sqrt{2\pi}}\int_{\R} e^{-ix\xi} f(x)dx, \quad
		\CF^{-1}[g](x) = \check{g}(x) := \frac{1}{\sqrt{2\pi}}\int_{\R} e^{ix\xi} g(\xi)d\xi.
	\end{equation}
	We define space-time Fourier transform as $\wt{u}(\ta,\xi):=\CF^{-1}_{t\to\ta}\CF_{x\to\xi} u(t,x)$.
	\item We define the free propagator $S(t)=e^{it\pl_x^2}:=\CF^{-1} e^{-it\xi^2} \CF$.
	We define $S_V(t,s)$ as the propagator of the linear Schr\"odinger equation
	\begin{equation}\label{eq:LSV}
		i\pl_t u +\pl_x^2 u - V(t,x)u=0, \quad u:[0,\I)\times\R_x \to\C.
	\end{equation}
	Namely, $u(t):=S_V(t,s)\phi$ is the solution to \eqref{eq:LSV} with the initial condition $u(s)=\phi$. We write $S_V(t):=S_V(t,0)$.
	\item We denote the operator norm and the Hilbert--Schmidt norm on $L^2(\R)$ by $\|\cdot\|_{\CB}$ and $\|\cdot\|_{\HS}$.
\end{itemize}

\subsection{Novelty and idea of the proof}
\subsubsection{Novelty}
The main contribution of this paper is to develop a linear and multilinear theory for the rough potential $V \in L^2_t H^{-1/2+\de}_x$ from which the well-posedness result follows as an application. More precisely, 
\begin{itemize}
	\item First, we construct the propagator $S_V(t,s)$. For this construction, see Section \ref{sec:UV}.
	\item Second, we extend the Ozawa--Tsutsumi type Strichartz estimate for the free flow
	\begin{equation}\label{eq:Ozawa Tsutsumi 0}
		\No{|S(t)\phi|^2}_{L^2_t([0,T];H_x^{1/2})} \ls_T \|\phi\|_{L^2_x}^2
	\end{equation}
	to the perturbed flow $S_V(t)$ (Proposition \ref{prop:key}).
\end{itemize}

\subsubsection{Idea of the proof}\label{sec:novelty}
Let $\varrho(t):=|u(t)|^2$. Then, at least formally, the solution to \eqref{eq:NLS} satisfies $u(t)=S_{a(D)\varrho}(t)\phi$.
Thus, $\varrho:=|u|^2$ satisfies
\begin{equation}\label{eq:varrho}
	\quad\varrho(t)=|S_{a(D)\varrho}(t)\phi|^2.
\end{equation}
Instead of treating the nonlinear equation directly, we first solve the closed equation of
the density $\varrho$, \eqref{eq:varrho}, and then recover $u$ by the formula $u(t)=S_{a(D)\varrho}(t)\phi$.
This idea has been used in the study of the Hartree equation for infinitely many particles \cite{Chen et al 2018, Lewin Sabin 2014 APDE}.

One of the main difficulties is that, if $\varrho \in L^2_t H^{1/2}_x$, then assumption \eqref{eq:assmption for m} only ensures $V=a(D)\varrho \in L^2_t H^{-1/2+\de}_x$,
so one must first make sense of $S_V(t,s)$ for such rough potentials.
As a first main contribution, this is achieved in Section \ref{sec:UV}
by studying the Dyson series expansion.

In Section 3, we generalize \eqref{eq:Ozawa Tsutsumi 0} to the perturbed flow.
Namely, we prove
$$\No{|S_V(t)\phi|^2}_{L^2_t([0,T];H^{1/2}_x)} \ls \|\phi\|_{L^2_x}^2$$
for small $V \in L^2_t H^{-1/2+\delta}_x$ and $T\le 1$. This is the second main ingredient.

Once these two results are available, the local well-posedness follows by the contraction mapping principle
for the map
$$\Phi[\varrho](t):=|S_{a(D)\varrho}(t)\phi|^2$$
on $L^2_t([0,T];H^{1/2}_x)$. Therefore, the well-posedness theory is not the only output of the
paper; rather, it is derived from the construction of $S_V$ for rough $V$ and from the Strichartz inequality for the perturbed flow.
Finally, when $a$ is real-valued, the $L^2_x$ norm is conserved, and the local solution extends globally.

\section{Construction of $S_V$}\label{sec:UV}
In this section, we consider the linear Schr\"odinger equation
\begin{equation}\label{eq:LS}
	i\pl_t u + \pl_x^2 u - V(t,x)u=0, \quad u:[0,\I)\times\R_x \to \C,
\end{equation}
where $V$ is a given \textit{complex-valued} function.
The aim of this section is to construct $S_V(t,s)$, the propagator of \eqref{eq:LS}.
It will be done in three steps. First, we define $S_V(t,s)$ by the Dyson series expansion under a smallness assumption on $V\in L^2_t H^{-1/2+\de}_x$ (Section \ref{sec:Dyson} and \ref{sec:smallness}).
Next, we prove stability and semigroup property in this regime (Section \ref{sec:basic}).
Finally, for general $V \in L^2_tH^{-1/2+\de}_x$, we define $S_V(t,s)$ and prove their basic properties (Section \ref{sec:unconditional}). 

\subsection{Dyson series expansion}\label{sec:Dyson}
If $V$ is a sufficiently good function, there is a propagator $S_V(t,s)$ of this equation. More precisely, $S_V(t,s)$ satisfies
\begin{equation}
	S_V(t,s) = S(t-s) - i\int_s^t S(t-\ta)V(\ta)S_V(\ta,s)d\ta.
\end{equation}
Applying this Duhamel formula repeatedly, we obtain the Dyson series expansion
\begin{equation}
	S_V(t,s)= S(t)\sum_{n \ge 0} W_V^{(n)}(t,s)S(s)^*,
\end{equation}
where
\begin{align}
	&W_V^{(0)}(t,s)=\Id_{L^2_x(\R)}, \\
	&W_V^{(n)}(t,s) = (-i)^n \int_s^t dt_1 \int_s^{t_1} dt_2 \cdots \int_s^{t_{n-1}}dt_n S(t_1)^* V(t_1) S(t_1) \cdots S(t_n)^* V(t_n) S(t_n).
\end{align}
Now, we introduce multilinear operators by
\begin{align}
	&W_{V_1,\dots,V_n}^{(n)}(t,s) := (-i)^n \int_s^t dt_1 \int_s^{t_1} dt_2 \cdots \int_s^{t_{n-1}}dt_n S(t_1)^* V_1(t_1) S(t_1) \cdots S(t_n)^* V_n(t_n) S(t_n), \\
	&W_{V_1,\dots,V_n}^{(n),+}(t,s) := (-i)^n \int_s^t dt_1 \int_s^t dt_2 \cdots \int_s^t dt_n S(t_1)^* V_1(t_1) S(t_1) \cdots S(t_n)^* V_n(t_n) S(t_n).
\end{align}
For simplicity, we write
$$W_V^{(n)}(t):=W_V^{(n)}(t,0), \quad W_{V_1,\dots,V_n}^{(n)}(t):= W_{V_1,\dots,V_n}^{(n)}(t,0), \quad W_{V_1,\dots,V_n}^{(n),+}(t):= W_{V_1,\dots,V_n}^{(n),+}(t,0).$$
As you can see in the above definitions, the operator 
\begin{equation}
	\phi \mapsto \int_I S(t)^* (V(t) S(t)\phi) dt.
\end{equation}
is the basic building block of the Dyson series expansion.
Thus, the key point is to show that conjugation by the free flow turns the rough potential $V\in L^2_t(I;H^{-1/2+\de}_x)$ into a bounded operator on $L^2_x(\R)$.

\subsection{Short-time construction under a smallness assumption}\label{sec:smallness}
In this section, we give a rigorous definition of $S_V(t,s)$ for small $V\in L^2_tH^{-1/2+\de}_x$ on a short-time interval.
We start with a simple but important observation.
\begin{lemma}\label{lem:1/2 gain}
	Let $I\subset \R$ be an interval.
	Then, we have
	\begin{equation}
		\No{\int_I S(t)^* V(t) S(t)dt}_{\CB} \ls \|V\|_{L^2_t(I;\dot{H}^{-1/2}_x(\R)) + L^{4/3}_t(I;L^2_x(\R))}.
	\end{equation}
\end{lemma}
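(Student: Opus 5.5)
Since the operator is linear in $V$ and the norm on the right is the sum-space norm, it suffices to bound the two pieces separately. Write $V=V_1+V_2$ with $V_1\in L^2_t(I;\dot H^{-1/2}_x)$ and $V_2\in L^{4/3}_t(I;L^2_x)$, prove the bound for each, and take the infimum over decompositions. In both cases we argue by duality: for $\phi,\psi\in L^2_x$,
\begin{equation}
	\Big\langle \psi, \int_I S(t)^*V(t)S(t)\phi\,dt\Big\rangle = \int_I\int_{\R} V(t,x)\,\overline{S(t)\psi}(t,x)\,S(t)\phi(t,x)\,dx\,dt,
\end{equation}
so it is enough to bound this space-time pairing by $\|V\|\,\|\phi\|_{L^2}\|\psi\|_{L^2}$. (Density arguments allow us to take $V$ Schwartz first.)

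\emph{The $L^{4/3}_tL^2_x$ piece.} Apply H\"older in $x$ and then in $t$ to get the bound
\begin{equation}
	\|V_2\|_{L^{4/3}_tL^2_x}\,\|\overline{S(t)\psi}\,S(t)\phi\|_{L^4_tL^2_x}\le \|V_2\|_{L^{4/3}_tL^2_x}\|S(t)\psi\|_{L^8_tL^4_x}\|S(t)\phi\|_{L^8_tL^4_x}.
\end{equation}
The pair $(8,4)$ is Schr\"odinger-admissible in one dimension, since $2/8+1/4=1/2$. The Strichartz estimate therefore closes this case with a constant independent of $I$.

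\emph{The $L^2_t\dot H^{-1/2}_x$ piece.} Bound the pairing by
\begin{equation}
	\|V_1\|_{L^2_t\dot H^{-1/2}_x}\,\big\|\overline{S(t)\psi}\,S(t)\phi\big\|_{L^2_t(\R;\dot H^{1/2}_x)}.
\end{equation}
We then need the bilinear Ozawa--Tsutsumi estimate in its homogeneous, global-in-time form:
\begin{equation}
	\| |\partial_x|^{1/2}(\overline{S(t)\psi}\,S(t)\phi)\|_{L^2_{t,x}(\R\times\R)}\lesssim\|\phi\|_{L^2}\|\psi\|_{L^2}.
\end{equation}
This is the main step. I would prove it directly with the space-time Fourier transform. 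The product has spatial frequency $\xi=\eta-\zeta$ and temporal frequency $\tau=\zeta^2-\eta^2=-\xi(\eta+\zeta)$, where $\eta$ is the frequency of $\phi$ and $\zeta$ that of $\psi$. For fixed $\xi$, the map $\eta\mapsto\tau$ has Jacobian $2|\xi|$. Plancherel in $(t,x)$ then gives
\begin{equation}
	\iint|\xi|\,\Big|\int\delta(\tau+\xi(2\eta-\xi))\,\hat\phi(\eta)\overline{\hat\psi(\eta-\xi)}\,d\eta\Big|^2 d\tau\,d\xi \sim \iint|\xi|\,\frac{1}{|\xi|^2}\,|\hat\phi(\eta)|^2|\hat\psi(\eta-\xi)|^2\,|\xi|\,d\eta\,d\xi.
\end{equation}
Here the delta integral evaluates to $(2|\xi|)^{-1}$ times the integrand at the unique root, and the change of variables $d\tau=2|\xi|\,d\eta$ produces the last factor $|\xi|$. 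The factors of $|\xi|$ cancel exactly, leaving $\|\phi\|_2^2\|\psi\|_2^2$. This exact cancellation is why the homogeneous $\dot H^{1/2}$ weight is the right one.

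Restricting the time integral to $I$ only decreases the norms, so both bounds are uniform in $I$. Combining them and taking the infimum over decompositions gives the lemma. The one delicate point is the bookkeeping of the Jacobian in the bilinear estimate; everything else is standard Strichartz and H\"older.
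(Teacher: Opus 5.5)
Your proof is correct. The $L^{4/3}_tL^2_x$ part is the same Strichartz argument the paper invokes; you simply make the H\"older step with the admissible pair $(8,4)$ explicit. For the $L^2_t\dot H^{-1/2}_x$ part you take the dual route. You test against $\phi,\psi$, reduce to the bilinear Ozawa--Tsutsumi estimate, and verify that estimate by a space-time Fourier computation.

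The paper instead bounds the operator norm by the Hilbert--Schmidt norm and computes the latter directly. The frequency-side kernel of $\int_\R S(t)^*V(t)S(t)\,dt$ is $\wt V(\xi^2-\eta^2,\xi-\eta)$. The substitution $c=\xi+\eta$, $r=\xi-\eta$ (so $\xi^2-\eta^2=rc$, with Jacobian $|r|$ in $c$) then gives $\|\int_\R S(t)^*V(t)S(t)\,dt\|_{\HS}\sim\||\pl_x|^{-1/2}V\|_{L^2_{t,x}}$. This is the same Jacobian as your $d\tau=2|\xi|\,d\eta$.

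Each route has an advantage. The paper's computation needs no delta-function bookkeeping and gives a strictly stronger conclusion: the operator is Hilbert--Schmidt. By duality, that is the Ozawa--Tsutsumi estimate for arbitrary Hilbert--Schmidt density matrices, not only rank-one ones, which is the natural form in the density-matrix framework the paper borrows from. Your route instead makes explicit that this lemma is exactly the dual of Theorem \ref{th:Ozawa Tsutsumi}, which the paper uses separately in Section \ref{sec:Strichartz}. The rank-one case you prove is all the operator-norm statement requires.
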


\begin{proof}
	We can assume $I=\R$.
	First, by the standard Strichartz estimate, we have
	\begin{equation}
		\No{\int_I S(t)^* V(t) S(t)dt}_{\CB} \ls \|V\|_{L^{4/3}_t(I;L^2_x)}.
	\end{equation}
	Next, we prove
	\begin{equation}
		\No{\int_I S(t)^* V(t) S(t)dt}_{\CB} \ls \|V\|_{L^{2}_t(I;\dH^{-1/2}_x)}.
	\end{equation}
	Note that $\|A\|_{\CB} \le \|A\|_{\HS}$ and $\|A\|_{\HS} = \|A(x,y)\|_{L^2_{x,y}}$, where $A(x,y)$ is the integral kernel of $A$.
	Hence, by the unitarity of $\CF$ and $\CF^{-1}$, we have
	\begin{align}
		&\No{\int_\R S(t)^* V(t) S(t)dt}_{\CB} \le \No{\int_\R S(t)^* V(t) S(t)dt}_{\HS} 
		\\
		&\quad\sim \No{\int_\R e^{it(\xi^2-\et^2)} \wh{V}(t,\xi-\et) dt}_{L^2_{\xi,\et}}  = \|\wt{V}(\xi^2-\et^2,\xi-\et)\|_{L^2_{\xi,\et}},
	\end{align}
	where $\wt{V}:=\CF^{-1}_{t\to\ta} \CF_{x\to\xi} V$.
	By changing variables $c=\xi+\et$ and $r=\xi-\et$, we obtain
	 \begin{align}
	 	\|\wt{V}(\xi^2-\et^2,\xi-\et)\|_{L^2_{\xi,\et}}
	 	\sim \|\wt{V}(rc,r)\|_{L^2_{r,c}} = \||r|^{-1/2} \wt{V}(c,r)\|_{L^2_{r,c}} = \||\pl_x|^{-1/2}V\|_{L^2_{t,x}},
	 \end{align}
	 which completes the proof.
\end{proof}

By Lemma \ref{lem:1/2 gain}, we get the following multilinear estimate.
\begin{lemma}\label{lem:multilinear bound}
	Let $-\I<s<t<\I$ be such that $|t-s|\le 1$.
	Let $0<\de<1$ and $\th_\de = \min(1/4,\de/2)$.
	Then, for any $n \ge 1$, we have
	\begin{equation}\label{eq:multilinear wave operator}
		\No{W_{V_1,\dots,V_n}^{(n)}(t,s)}_{\CB}
		\le C_\de^n |t-s|^{n\th_\de} \prod_{j=1}^n \|V_j\|_{L^2_t([s,t];H^{-1/2+\de}_x(\R))}.
	\end{equation}
\end{lemma}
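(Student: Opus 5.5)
The plan is to reduce the bound to a single-interval estimate with a small power of $|J|$ to spare, and then to control the time-ordered (simplex) integrals by a rough-path / Young-integration argument. The simplex constraint $s<t_n<\dots<t_1<t$ is what prevents a direct iteration of Lemma \ref{lem:1/2 gain}, since the ordered integral does not factor into a product of single integrals.

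\textbf{Step 1: single-interval bound with a time gain.} I would first show that for every interval $J\subset[s,t]$,
\begin{equation}
\No{\int_J S(\tau)^*V(\tau)S(\tau)\,d\tau}_{\CB}\le C_\de\,|J|^{\th_\de}\,\|V\|_{L^2_t(J;H^{-1/2+\de}_x)} .
\end{equation}
\begin{itemize}
\item Split $V=P_{\le1}V+P_{>1}V$ in frequency.
\item For the low part, use the $L^{4/3}_tL^2_x$ branch of Lemma \ref{lem:1/2 gain} together with Hölder in time. This gives $|J|^{1/4}\|P_{\le1}V\|_{L^2_tL^2_x}\ls|J|^{1/4}\|V\|_{L^2_tH^{-1/2+\de}_x}$.
\item For the high part, the linear map $V\mapsto\int_J S^*VS$ is bounded from $L^2_t\dH^{-1/2}_x$, and from $L^2_tL^2_x\subset|J|^{1/4}L^{4/3}_tL^2_x$ with constant $|J|^{1/4}$.
\item Complex interpolation with parameter $2\de$ (when $\de\le1/2$) gives a bound from $L^2_t\dH^{-1/2+\de}_x$ with constant $|J|^{\de/2}$. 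At high frequency this norm is comparable to the inhomogeneous one.
\item When $\de>1/2$, simply use $H^{-1/2+\de}\subset L^2$ on high frequencies, which gives $|J|^{1/4}$.
\end{itemize}
Since $|J|\le1$, this produces the exponent $\th_\de=\min(1/4,\de/2)$.

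\textbf{Step 2: a superadditive control.} Set $\th=\th_\de$ and $p=(\th+1/2)^{-1}$; note that $p<2$. For $s\le u<v\le t$ define
\begin{equation}
\omega(u,v):=\Big(|v-u|^{\th}\sum_{j=1}^n\|V_j\|_{L^2([u,v];H^{-1/2+\de}_x)}\Big)^{p} .
\end{equation}
To handle distinct $V_j$ cleanly, I would rather normalize so that each $\|V_j\|=1$ and use $|v-u|^{\th}\|V_j\|_{L^2([u,v])}\le\omega_j(u,v)^{1/p}$ with $\omega_j:=|v-u|^{2\th/(2\th+1)}\|V_j\|_{L^2([u,v])}^{2/(2\th+1)}$. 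Each $\omega_j$ is superadditive by Hölder's inequality: the exponents $2\th/(2\th+1)$ and $1/(2\th+1)$ sum to $1$, and both $|v-u|$ and $\|V_j\|_{L^2([u,v])}^2$ are additive. I then take $\omega:=\sum_j\omega_j$. By Step 1, each increment $X_j(u,v):=\int_u^vS^*V_jS$ satisfies $\|X_j(u,v)\|_{\CB}\le C\,\omega(u,v)^{1/p}$. Moreover, $W^{(n)}_{V_1,\dots,V_n}$ is exactly the iterated (Chen-type) integral of these operator-valued paths in the Banach algebra $\CB$.

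\textbf{Step 3: Lyons' extension / Young integration.} Since $p<2$ we are in the Young regime. Lyons' extension theorem (equivalently, Young's sewing argument repeated by induction on $n$) then bounds the $n$-th iterated integral by
\begin{equation}
\frac{(C\,\omega(s,t))^{n/p}}{\beta_p\,(n/p)!}.
\end{equation}
Concretely, I would run the standard argument in three moves:
\begin{itemize}
\item Use Chen's identity $W^{(n)}(t,s)=\sum_{k=0}^nW^{(k)}_{V_1..V_k}(t,c)\,W^{(n-k)}_{V_{k+1}..V_n}(c,s)$.
\item Remove partition points one at a time, choosing each time a point $c$ with $\omega(c_{-},c_{+})\le\frac{2}{N-1}\omega(s,t)$, which is possible by superadditivity.
\item Use the neo-classical inequality to close the induction.
\end{itemize}
Un-normalizing and using $\omega(s,t)^{1/p}\ls|t-s|^{\th}$ (from the $\|V_j\|=1$ normalization and $|t-s|\le 1$), the bound becomes $C_\de^n|t-s|^{n\th}\prod_j\|V_j\|$. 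The factorial only helps. Multilinearity is handled either by the normalization above, or by scaling $V_j\mapsto\lambda_jV_j$ and comparing homogeneous parts.

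\textbf{Main obstacle.} The delicate step is Step 3: making the sewing/extension argument rigorous for operator-valued paths with distinct $V_j$, and checking that the constants grow only geometrically in $n$. Step 1 is routine given Lemma \ref{lem:1/2 gain}. If the abstract Lyons machinery proves awkward, I would fall back on a direct induction: bisect $[s,t]$ at the point where $\omega$ is halved, apply Chen's identity, and absorb the resulting $(n+1)$ terms using the gain $2^{-n/p}$ with $1/p>1/2$ together with the binomial-type neo-classical inequality.
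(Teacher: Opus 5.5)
Your proposal is correct in outline, but it takes a genuinely different route from the paper. The paper never builds a control function. It observes that the box operator $W^{(n),+}_{V_1,\dots,V_n}(t,s)$ factorizes into a product of single integrals $\int_s^t S(\tau)^*V_j(\tau)S(\tau)\,d\tau$. It bounds each factor by Lemma~\ref{lem:1/2 gain} in $L^{p_\delta}_t\dot H^{-1/2+\delta}_x$ with $p_\delta=2/(1+\delta)$, obtained by interpolating the two branches, and in $L^{4/3}_tL^2_x$ for low frequencies. It then passes from the box to the simplex with the Christ--Kiselev lemma (Lemma~\ref{lem:CK}, iterated over the ordered times), which applies because $2/p_\delta>1$. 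The factor $|t-s|^{n\theta_\delta}$ appears only at the end, from H\"older in time. You instead build the time gain into the single-interval bound, encode it in a superadditive control with $1/p=\theta_\delta+1/2>1/2$, and use Young/Lyons sewing.

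The numerology is the same ($1/p_\delta=1/2+\delta/2$ versus $1/p=1/2+\theta_\delta$): both arguments exploit an effective time-integrability exponent strictly below $2$. The paper's route is shorter, relies on an off-the-shelf lemma, and gives only geometric constants $C_\delta^n$. Yours is heavier but yields the factorial bound $(K\omega(s,t))^{n/p}/(\beta\,\Gamma(n/p+1))$. Consequently the Dyson series converges for every $V\in L^2_tH^{-1/2+\delta}_x$ on intervals of length at most $1$ without any smallness assumption. That would make the subdivision in Definition~\ref{def:UV unconditional} unnecessary and give an explicit growth bound in Lemma~\ref{lem:basic property of UV norm}(iii).

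A few points need fixing.
\begin{enumerate}
\item With $\omega=\sum_{j=1}^n\omega_j$ and $\|V_j\|=1$ you get $\omega(s,t)=n|t-s|^{2\theta/(2\theta+1)}$. So $\omega(s,t)^{1/p}\lesssim|t-s|^{\theta}$ fails uniformly in $n$. The factorial is therefore not a bonus but is needed; Stirling, $n^{n/p}/\Gamma(n/p+1)\le(ep)^{n/p}$, closes the argument.
\item The alternative of scaling $V_j\mapsto\lambda_jV_j$ and extracting the coefficient of $\lambda_1\cdots\lambda_n$ does not isolate the ordered term. Because the form is not symmetric, it only recovers $\sum_{\sigma}W^{(n)}_{V_{\sigma(1)},\dots,V_{\sigma(n)}}$. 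Keep the word-indexed Chen identity instead.
\item $W^{(n)}$ has no classical meaning for rough $V$, so first prove the bound for smooth $V_j$. There $X_j$ is Lipschitz in $\mathcal{B}$, and the classical iterated integral coincides with the Lyons extension by uniqueness, since both are multiplicative with level-$n$ bounds $\lesssim\tilde\omega^{\gamma}$ for some $\gamma>1$. Then extend multilinearly by density, which is also how the paper's statement has to be read.
\end{enumerate}
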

\begin{proof}
	Let $p_\de = 2/(1+\de)$. 
	By Lemma \ref{lem:1/2 gain}, we have
	\begin{equation}
		\No{W_{V_1,\dots,V_n}^{(n),+}(t,s)}_{\CB} \le C^n \prod_{j=1}^n \|V_j\|_{L^2_t \dot{H}^{-1/2}_x + L^{4/3}_t L^2_x}
		\le C^n \prod_{j=1}^n \|V_j\|_{L^{p_\de}_t \dot{H}^{-1/2+\de}_x + L^{4/3}_t L^2_x} .
	\end{equation}
In particular, we have
	\begin{equation}
	    \No{W_{V_1,\dots,V_n}^{(n),+}(t,s)}_{\CB}
	    \le C^n \prod_{j=1}^n \|V_j\|_{L^{p_\de}_t \dot{H}^{-1/2+\de}_x}.
	\end{equation}
	Since $1/p_\de + 1/p_\de >1$, by Lemma \ref{lem:CK}, we have
	\begin{equation}
	\No{W_{V_1,\dots,V_n}^{(n)}(t,s)}_{\CB}
		\le C_\de^n \prod_{j=1}^n \|V_j\|_{L^{p_\de}_t \dot{H}^{-1/2+\de}_x}\le C_\de^n|t-s|^{n\de/2} \prod_{j=1}^n \|V_j\|_{L^2_t \dot{H}^{-1/2+\de}_x}.
	\end{equation}
	Similarly, since $1/(4/3) + 1/(4/3)>1$, again by Lemma \ref{lem:CK}, it follows that 
	\begin{equation}
		\No{W_{V_1,\dots,V_n}^{(n)}(t,s)}_{\CB}
		\le C^n |t-s|^{n/4} \prod_{j=1}^n \|V_j\|_{L^2_t L^2_x}.
	\end{equation}
	Hence, we obtain
	\begin{align}
		\No{W_{V_1,\dots,V_n}^{(n)}(t,s)}_{\CB}
		\le C_\de^n |t-s|^{n\th_\de} \prod_{j=1}^n \|V_j\|_{L^2_tL^2_x + L^2_t \dH^{-1/2+\de}_x}
		\sim C_\de^n |t-s|^{n\th_\de} \prod_{j=1}^n \|V_j\|_{L^2_tH^{-1/2+\de}_x},
	\end{align}
	which completes the proof.
\end{proof}

Since $W_V^{(n)}(t,s)=W_{V,\dots,V}^{(n)}(t,s)$, we have the following corollary.
\begin{corollary}\label{cor:wave operator}
	Let $-\I<s<t<\I$ be such that $|t-s|\le 1$. 
	Let $0<\de<1$ and $\th_\de = \min(1/4, \de/2)$.
	Then, for any $n \ge 1$, we have
	\begin{equation}
		\No{W_V^{(n)}(t,s)}_{\CB} \le C_\de^n |t-s|^{n\th_\de}\|V\|_{L^2_t([s,t];H^{-1/2+\de}_x(\R))}^n.
	\end{equation}
\end{corollary}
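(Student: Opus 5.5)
This corollary is the diagonal case of Lemma \ref{lem:multilinear bound}, so the plan is simply to specialize that lemma. By definition,
\begin{equation}
	W_V^{(n)}(t,s) = (-i)^n \int_s^t dt_1 \int_s^{t_1} dt_2 \cdots \int_s^{t_{n-1}}dt_n\, S(t_1)^* V(t_1) S(t_1) \cdots S(t_n)^* V(t_n) S(t_n),
\end{equation}
and this coincides termwise with the multilinear operator $W_{V_1,\dots,V_n}^{(n)}(t,s)$ when $V_1=\cdots=V_n=V$. Thus $W_V^{(n)}(t,s)=W_{V,\dots,V}^{(n)}(t,s)$ as operators on $L^2_x(\R)$.

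First I check that the hypotheses match: $-\I<s<t<\I$ with $|t-s|\le 1$, $0<\de<1$, and the same $\th_\de=\min(1/4,\de/2)$. I then apply \eqref{eq:multilinear wave operator} with every $V_j$ equal to $V$. Each factor $\|V_j\|_{L^2_t([s,t];H^{-1/2+\de}_x(\R))}$ becomes $\|V\|_{L^2_t([s,t];H^{-1/2+\de}_x(\R))}$, and the product of the $n$ factors is its $n$-th power. This gives the stated bound with the same constant $C_\de$.

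There is no real obstacle. The only point requiring care is that the constant $C_\de$ in Lemma \ref{lem:multilinear bound} is uniform in $n$ and in the choice of the $V_j$, so it passes unchanged to the diagonal case. All the analytic work is already contained in Lemma \ref{lem:1/2 gain} and in the time-ordering argument (Lemma \ref{lem:CK}) used to prove Lemma \ref{lem:multilinear bound}.
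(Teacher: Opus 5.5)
Your proposal is correct and matches the paper's argument: the paper likewise obtains the corollary in one line from Lemma \ref{lem:multilinear bound} via the identity $W_V^{(n)}(t,s)=W_{V,\dots,V}^{(n)}(t,s)$. You are also right that this works because the constant $C_\de$ in that lemma depends only on $\de$, so it carries over unchanged to the diagonal case.
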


Now we give a rigorous definition of $S_V(t,s)$ when $V$ is small.
\begin{definition}[Definition of $S_V$ with a smallness assumption]\label{def:UV}
	Let $-\I<a<b<\I$ be such that $|b-a|\le 1$. 
	Let $0<\de<1$ and $\th_\de=\min(1/4,\de/2)$.
	Assume that $V \in L^2_t([a,b];H^{-1/2+\de}_x(\R))$ satisfies
	$$C_\de|b-a|^{\th_\de}\|V\|_{L^2_t([a,b];H^{-1/2+\de}_x(\R))}\le \tw,$$
	where $C_\de>0$ is the same constant as in Corollary \ref{cor:wave operator}.
	Then, we define $(S_V(t,s))_{a\le s \le t \le b}$ by
	\begin{equation}\label{eq:UV definition}
		S_V(t,s) = S(t-s)+ S(t)\sum_{n\ge 1}W_V^{(n)}(t,s)S(s)^*.
	\end{equation}
\end{definition}
\begin{remark}
	By Corollary \ref{cor:wave operator} and the smallness assumption for $V$, the infinite series in the right-hand side of \eqref{eq:UV definition} absolutely converges in $\CB(L^2(\R))$. Moreover, we have
	\begin{equation}\label{eq:bound of UV}
		\sup_{a\le s\le t \le b} \|S_V(t,s)\|_{\CB} \le 2.
	\end{equation}
\end{remark}

\subsection{Basic properties}\label{sec:basic}
In this section, we prove that the $S_V$ we constructed in the previous section satisfies reasonable properties.
The next estimate gives the stability of the propagator with respect to the potential. This will be used both to pass from smooth potentials to rough ones and, later, to prove Lipschitz continuity in Section \ref{sec:proof}.
\begin{lemma}[Difference estimate]\label{lem:difference estimate}
Let $-\I<a<b<\I$ be such that $|b-a|\le 1$. 
Let $0<\de<1$ and $\th_\de := \min(1/4,\de/2)$.
Assume that $V,V'\in L^2_t([a,b];H^{-1/2+\de}_x(\R))$ satisfy
$$C_\de|b-a|^{\th_\de}\max\K{\|V\|_{L^2_t([a,b];H^{-1/2+\de}_x(\R))},\|V'\|_{L^2_t([a,b];H^{-1/2+\de}_x(\R))}}\le\frac{2}{3},$$
where $C_\de>0$ is the same constant as in Corollary \ref{cor:wave operator}.
Then, we have
\begin{equation}
	\sup_{a\le s\le t\le b}\No{S_V(t,s)-S_{V'}(t,s)}_{\CB} \le 9C_\de|b-a|^{\th_\de} \|V-V'\|_{L^2_t([a,b];H^{-1/2+\de}_x(\R))}.
\end{equation}
\end{lemma}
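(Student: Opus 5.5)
The plan is to compare the two Dyson series term by term, using multilinearity and a telescoping decomposition. By Definition \ref{def:UV}, both $S_V(t,s)$ and $S_{V'}(t,s)$ are given by absolutely convergent series. The hypothesis with $2/3$ is stronger than the hypothesis with $1/2$ required there, so first I will check that $2/3$ still gives convergence: $\sum_n (2/3)^n$ is finite. (If the definition literally demands $1/2$, I would note that the series converges under any bound $<1$, so the same definition applies.) Since $S(t)$ and $S(s)^*$ are unitary, it suffices to bound
\begin{equation}
	\sum_{n\ge1}\No{W_V^{(n)}(t,s)-W_{V'}^{(n)}(t,s)}_{\CB}.
\end{equation}

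For each $n\ge1$, multilinearity of $W^{(n)}_{V_1,\dots,V_n}$ gives the telescoping identity
\begin{equation}
	W_V^{(n)}-W_{V'}^{(n)}=\sum_{k=1}^n W^{(n)}_{V',\dots,V',V-V',V,\dots,V},
\end{equation}
where $V-V'$ sits in the $k$-th slot, the first $k-1$ slots are $V'$, and the remaining slots are $V$. I apply Lemma \ref{lem:multilinear bound} on the interval $[s,t]\subset[a,b]$, using $|t-s|\le|b-a|\le 1$ and the fact that the $L^2_t([s,t])$ norm is at most the $L^2_t([a,b])$ norm. Each summand is then bounded by
\begin{equation}
	C_\de^n|b-a|^{n\th_\de}\|V-V'\|\,M^{n-1}, \qquad M:=\max(\|V\|,\|V'\|),
\end{equation}
with all norms taken in $L^2_t([a,b];H^{-1/2+\de}_x)$. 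Writing $x:=C_\de|b-a|^{\th_\de}M\le 2/3$, the $n$-th difference is therefore at most $C_\de|b-a|^{\th_\de}\|V-V'\|\,n x^{n-1}$.

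Summing over $n$ gives
\begin{equation}
	\sum_{n\ge1} n x^{n-1}=(1-x)^{-2}\le 9,
\end{equation}
which produces exactly the stated constant $9C_\de|b-a|^{\th_\de}$. The bound is uniform in $a\le s\le t\le b$, so taking the supremum finishes the proof.

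There is no real obstacle. The only delicate point is to make sure the constant $C_\de$ in Lemma \ref{lem:multilinear bound} for mixed arguments is the same one that appears in Corollary \ref{cor:wave operator}; it is, since the corollary is derived directly from the lemma.
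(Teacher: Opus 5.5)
Your proposal is correct and follows the paper's proof essentially verbatim. Both use the same telescoping decomposition with $V-V'$ in the $k$-th slot, Lemma~\ref{lem:multilinear bound} applied on $[s,t]\subset[a,b]$, and the sum $\sum_{n\ge1} n(2/3)^{n-1}=9$.

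One wording slip: the hypothesis with $2/3$ is \emph{weaker}, not stronger, than the $1/2$ required in Definition~\ref{def:UV}. Your observation that the Dyson series still converges absolutely for any bound below $1$ handles this correctly, and the paper passes over this point silently.
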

\begin{proof}
	By Lemma \ref{lem:multilinear bound}, we obtain
	\begin{align}
		&\|S_V(t,s)-S_{V'}(t,s)\|_{\CB} = \No{\sum_{n\ge 0} W_{V,\dots,V}^{(n)}(t,s)- \sum_{n\ge 0} W_{V',\dots,V'}^{(n)}(t,s)}_{\CB} \\
		&\quad \le \sum_{n\ge 1} \No{W^{(n)}_{V,\dots,V}(t,s) - W_{V',\dots,V'}^{(n)}(t,s)}_{\CB} \\
		&\quad \le \sum_{n\ge 1} \Big(\|W^{(n)}_{V-V',V\dots,V}(t,s)\|_{\CB} + \|W^{(n)}_{V',V-V',V\dots,V}(t,s)\|_{\CB} +\cdots + \|W^{(n)}_{V',\dots,V',V-V'}(t,s)\|_{\CB} \Big) \\
		&\quad \le C_\de |t-s|^{\th_\de}\|V-V'\|_{L^2_t([a,b];H^{-1/2+\de}_x)} \sum_{n\ge 1} n\K{\frac{2}{3}}^{n-1}
		\le 9C_\de|b-a|^{\th_\de} \|V-V'\|_{L^2_t([a,b];H^{-1/2+\de}_x)}.
	\end{align}
\end{proof}

Next, we prove some basic properties of $S_V(t,s)$.
\begin{lemma}\label{lem:basic UV}
	Let $-\I<a<b<\I$ be such that $|b-a|\le 1$.
	Let $0<\de<1$ and $\th_\de = \min(1/4,\de/2)$.
	Assume that $V \in L^2_t([a,b];H^{-1/2+\de}_x(\R))$ satisfies
	$$C_\de|b-a|^{\th_\de}\|V\|_{L^2_t([a,b];H^{-1/2+\de}_x(\R))}\le \tw,$$
	where $C_\de>0$ is the same constant as in Corollary \ref{cor:wave operator}.
	Then, we have a family of the bounded operators $(S_V(t,s))_{a\le s \le t \le b}$.
	Moreover, it satisfies
	\begin{itemize}
		\item[$(i)$] $S_V(t,s)S_V(s,r) = S_V(t,r)$ for all $a\le r\le s\le t \le b$,
		\item[$(ii)$] $t\mapsto S_V(t,s)$ and $s\mapsto S_V(t,s)$ are strongly continuous in $L^2_x(\R)$.
	\end{itemize}
\end{lemma}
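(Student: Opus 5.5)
The plan is to argue at the level of the Dyson series, working with the conjugated operators $U_V(t,s):=\sum_{n\ge 0}W_V^{(n)}(t,s)$, so that $S_V(t,s)=S(t)U_V(t,s)S(s)^*$. Under the smallness assumption, Corollary \ref{cor:wave operator} gives absolute convergence of the series in $\CB$, uniformly for $a\le s\le t\le b$. Hence $S_V(t,s)$ is a well-defined family of bounded operators with $\|S_V(t,s)\|_{\CB}\le 2$.

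For the semigroup property $(i)$, note that $S(s)^*S(s)=\Id$. It therefore suffices to show $U_V(t,s)U_V(s,r)=U_V(t,r)$. I would first prove this for smooth, compactly supported $V$ (say $V\in C_c^\infty$ in $(t,x)$), where the identity is purely combinatorial. The ordered simplex $\{t\ge t_1\ge\cdots\ge t_n\ge r\}$ decomposes, up to measure zero, into the disjoint pieces where exactly $k$ of the times lie in $[s,t]$ and $n-k$ lie in $[r,s]$. This gives
\begin{equation}
W_V^{(n)}(t,r)=\sum_{k=0}^n W_V^{(k)}(t,s)W_V^{(n-k)}(s,r).
\end{equation}
For smooth $V$ the integrands $S(\tau)^*V(\tau)S(\tau)$ are bounded on $L^2$, so Fubini applies without issue. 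Summing over $n$, which is legitimate by absolute convergence, yields the identity.

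To pass to general $V$, I approximate $V$ by smooth $V_k\to V$ in $L^2_t([a,b];H^{-1/2+\de}_x)$. I would choose the norms slightly smaller so that the hypothesis of Lemma \ref{lem:difference estimate} holds; for instance, scale $V_k$ so that $\|V_k\|\le\|V\|$, which is possible by mollification with a norm bound. Lemma \ref{lem:difference estimate} then gives $S_{V_k}(t,s)\to S_V(t,s)$ in $\CB$, uniformly in $s,t$, and the product identity passes to the limit.

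For the strong continuity in $(ii)$, I would again use approximation. The main obstacle is that the termwise bounds in Lemma \ref{lem:multilinear bound} control the operator norm on the full interval but not directly the continuity in $t$. I would handle this by showing that $t\mapsto W_V^{(n)}(t,s)$ is continuous in $\CB$.

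The first step is the estimate
\begin{equation}
\|W_V^{(n)}(t',s)-W_V^{(n)}(t,s)\|_{\CB}\le nC_\de^n\|V\|_{L^2([t,t'];H^{-1/2+\de})}\|V\|^{n-1}_{L^2([a,b];H^{-1/2+\de})}.
\end{equation}
To obtain it, write the difference as the integral in which $t_1\in[t,t']$, with the remaining times ordered below $t_1$. The factor structure is then $W^{(n)}_{V\II_{[t,t']},V,\dots,V}(t',s)$. This is exactly the multilinear form of Lemma \ref{lem:multilinear bound} with one entry localized, so the multilinear bound applies directly (even without the factor $n$).

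Since $\|V\|_{L^2([t,t'];H^{-1/2+\de})}\to0$ as $t'\to t$ by absolute continuity of the integral, summing over $n$ gives operator-norm continuity of $U_V(\cdot,s)$, and the same argument applies in $s$. Finally, $S_V(t,s)=S(t)U_V(t,s)S(s)^*$ is a composition of strongly continuous unitary groups with a norm-continuous, uniformly bounded family. Strong continuity of such a product follows from the standard $\varepsilon/3$ argument.
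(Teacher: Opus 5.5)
Your argument is correct, but it takes a different route from the paper's.

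\textbf{The paper's route.} The paper never works with the structure of the series. It asserts that for $V\in C_c^\infty$ the series \eqref{eq:UV definition} is the genuine propagator satisfying the Duhamel formula, so (i) and (ii) follow from the standard theory for smooth potentials. It then passes both properties to rough $V$ through the uniform-in-$(s,t)$ operator-norm convergence $S_{V_j}\to S_V$ from Lemma \ref{lem:difference estimate}. A uniform norm limit of uniformly bounded, strongly continuous families is strongly continuous, and products converge in norm.

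\textbf{Your route.} You prove (i) from the simplex decomposition $W_V^{(n)}(t,r)=\sum_{k=0}^n W_V^{(k)}(t,s)W_V^{(n-k)}(s,r)$ together with a Cauchy product. This never requires identifying the Dyson series with a solution operator of \eqref{eq:LS}. It could even be done for rough $V$ directly: the multilinear version of the identity extends by density via Lemma \ref{lem:multilinear bound}, so no approximation of $S_V$ is needed. You prove (ii) directly for rough $V$ via $W_V^{(n)}(t',s)-W_V^{(n)}(t,s)=W^{(n)}_{V\mathbbm{1}_{(t,t']},V,\dots,V}(t',s)$, and in $s$ the last entry is localized instead, $V\mathbbm{1}_{[s,s')}$. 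This gives something the paper does not state: the interaction-picture operator $U_V(t,s)$ is continuous in operator norm, with modulus $\lesssim \|V\|_{L^2_t([t,t'];H^{-1/2+\delta}_x)}$. The only merely strong continuity in $S_V$ then comes from the free flow. The paper's route is shorter, but it leans on the unproved ``standard argument'' for smooth potentials. Yours is self-contained and quantitative.

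\textbf{One slip to fix.} Your displayed bound for $\|W_V^{(n)}(t',s)-W_V^{(n)}(t,s)\|_{\mathcal{B}}$ drops the factor $|t'-s|^{n\theta_\delta}$ from Lemma \ref{lem:multilinear bound}. Without it, summing over $n$ requires $C_\delta\|V\|_{L^2_t([a,b];H^{-1/2+\delta}_x)}<1$. The hypothesis does not give this when $|b-a|<1$; it only gives $C_\delta|b-a|^{\theta_\delta}\|V\|_{L^2_t([a,b];H^{-1/2+\delta}_x)}\le 1/2$. Keep the time factor, as the multilinear lemma actually provides. The sum is then at most $2C_\delta|b-a|^{\theta_\delta}\|V\|_{L^2_t([t,t'];H^{-1/2+\delta}_x)}$, and the extra factor $n$ you wrote is unnecessary.

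Your rescaling of the approximants $V_k$ is also unnecessary, though harmless. Lemma \ref{lem:difference estimate} only needs the threshold $2/3$, so any $V_k$ sufficiently close to $V$ already qualifies.
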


\begin{proof}
	Note that $C_c^\I([a,b]\times\R)$ is dense in $L^2_t([a,b];H_x^{-1/2+\de})$.
	When $V\in C_c^\I([a,b]\times\R)$, the propagator $S_V(t,s)$ satisfies the standard Duhamel formula
	\begin{equation}
		S_V(t,s)=S(t-s) - i\int_s^t S(t-\ta)V(\ta)S_V(\ta,s)d\ta.
	\end{equation}
	Hence, the standard argument implies $(i)$ and $(ii)$.
	
	Let $V \in L^2_t([a,b];H^{-1/2+\de}_x)$. 
	Then, we can choose $V_j\in C_c^\I([a,b]\times\R)$ such that $V_j \to V$ in $L^2_t([a,b];H^{-1/2+\de}_x)$.
	By Lemma \ref{lem:difference estimate}, we know 
	\begin{equation}
		\lim_{j\to \I} \sup_{a\le s\le t\le b} \No{S_{V_j}(t,s) -S_{V}(t,s)}_\CB = 0.
	\end{equation}
	Therefore, $(S_V(t,s))_{a\le s\le t\le b}$ also satisfies $(i)$ and $(ii)$.
\end{proof}

\subsection{Extension to general potentials}\label{sec:unconditional}
Finally, we define $S_V(t,s)$ without any size restriction of $V$ and $|t-s|$.
To remove the smallness assumption, we subdivide the interval $[s,t]$ into subintervals on which the Dyson series converges, and then compose the corresponding short-time propagators.
\begin{definition}[Unconditional definition of $S_V(t,s)$]\label{def:UV unconditional}
	Let $-\I < s < t < \I$. Let $0<\de<1$ and $\th_\de = \min(1/4,\de/2)$.
	Assume that $V \in L^2_t([s,t];H^{-1/2+\de}_x(\R))$. 
	Then, we can decompose $[s,t]$ as $s=r_0 < r_1 < \cdots < r_{N+1} = t$
	such that $|r_{n+1}-r_n|\le 1$ for all $n=0,\dots,N$ and 
	$$\sup_{n=0,\dots,N}C_\de |r_{n+1}-r_{n}|^{\th_\de}\|V\|_{L^2_t([r_{n},r_{n+1}];H^{-1/2+\de}_x(\R))}\le \tw,$$
	where $C_\de>0$ is the same constant as in Corollary \ref{cor:wave operator}.
	Then, we define $S_V(t,s)$ by
	\begin{equation}\label{eq:UV definition unconditional}
		S_V(t,s) = S_V(t,r_{N})\cdots S_V(r_1,s).
	\end{equation}
\end{definition}

\begin{remark}
Definition \ref{def:UV unconditional} is well-defined.
Let $s=r_0<r_1<\cdots<r_{N+1} = t$ and $s=r'_0< \cdots < r'_{N'+1} = t$ be different partitions of $[s,t]$.
Then, we take a common refinement
$$s=r''_0 <r''_1 < \cdots < r''_{N''+1} = t.$$
Therefore, by Lemma \ref{lem:basic UV}, we have 
\begin{equation}
	S_V(t,r_N)\cdots S_V(r_1,s)
	= S_V(t,r''_{N''})\cdots S_V(r''_1,s) = S_V(t,r'_{N'})\cdots S_V(r'_1,s).
\end{equation}
\end{remark}

\begin{lemma}[Basic properties of $S_V$]\label{lem:basic property of UV norm}
	Let $-\I<a<b\le \I$. Let $0<\de<1$. Assume that
	$V \in L^2_{t,\loc} ([a,b);H^{-1/2+\de}_x(\R)).$ Then, we have a family of bounded operators $(S_V(t,s))_{a\le s \le t < b}$.
	Moreover, it satisfies
	\begin{itemize}
	\item[$(i)$] $S_V(t,s)S_V(s,r) = S_V(t,r)$ for any $a\le r\le s\le t< b$,
	\item[$(ii)$] $t\mapsto S_V(t,s)$ and $s \mapsto S_V(t,s)$ are strongly continuous in $L^2_x(\R)$.
	\item[$(iii)$] There exists a monotone increasing function $A:[0,\I)\to [0,\I)$ such that
	\begin{equation}\label{eq:SV bound}
		\|S_V(t,s)\|_{\CB(L^2_x(\R))} \le A\K{\|V\|^2_{L^2_t([s,t];H^{-1/2+\de}_x)} + |t-s|}.
	\end{equation}
	\end{itemize}
\end{lemma}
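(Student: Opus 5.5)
The plan is to reduce everything to the short-time theory of Lemma \ref{lem:basic UV} through the partition construction of Definition \ref{def:UV unconditional}. Fix $a\le s<t<b$. Since $V\in L^2_t([s,t];H^{-1/2+\de}_x)$ and $r\mapsto \|V\|_{L^2_t([s,r];H^{-1/2+\de}_x)}$ is continuous (absolute continuity of the integral), a partition as in Definition \ref{def:UV unconditional} exists. For example, we can pick the $r_n$ greedily so that each piece has length $\le 1$ and satisfies $C_\de \|V\|_{L^2_t([r_n,r_{n+1}];H^{-1/2+\de}_x)}\le 1/2$. By the remark following that definition, $S_V(t,s)$ is well-defined and bounded, being a finite product of operators of norm $\le 2$ by \eqref{eq:bound of UV}.

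For $(i)$, given $r\le s\le t$, I take a partition of $[r,s]$ and a partition of $[s,t]$ and concatenate them into an admissible partition of $[r,t]$ that contains $s$ as a node. Independence of the partition then gives $S_V(t,r)=S_V(t,s)S_V(s,r)$ directly.

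For $(ii)$, fix $s$ and $t_0>s$ (the case $t_0=s$ is covered by Lemma \ref{lem:basic UV} on a small interval). I choose a partition of $[s,t_0+\epsilon]$ with $t_0$ lying in the interior of one subinterval $[r_n,r_{n+1}]$, or at a node, in which case I handle the two sides separately. For $t$ near $t_0$, the semigroup property gives $S_V(t,s)=S_V(t,r_n)S_V(r_n,s)$. The map $t\mapsto S_V(t,r_n)$ is strongly continuous by Lemma \ref{lem:basic UV}$(ii)$, and composing on the right with a fixed bounded operator preserves strong continuity. Continuity in $s$ is symmetric: write $S_V(t,s)=S_V(t,r_{n+1})S_V(r_{n+1},s)$. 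Here the left factor is a fixed bounded operator, so strong continuity of $s\mapsto S_V(r_{n+1},s)$ transfers.

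For $(iii)$, the task is to count the pieces. Set $M=\|V\|^2_{L^2_t([s,t];H^{-1/2+\de}_x)}$ and $L=t-s$. The greedy partition above uses the weaker but sufficient condition $C_\de\|V\|_{L^2}\le 1/2$ together with length $\le 1$; this implies the condition in Definition \ref{def:UV unconditional}, since $|r_{n+1}-r_n|^{\th_\de}\le 1$. Each full piece either has length $1$ or carries $\|V\|^2_{L^2}=(2C_\de)^{-2}$. Hence the number of pieces is at most $N+1\le L+4C_\de^2M+2$, and
\[
\|S_V(t,s)\|_{\CB}\le 2^{N+1}\le 2^{\,2+\max(1,4C_\de^2)(M+L)}=:A(M+L),
\]
which is monotone increasing in its argument. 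The main subtlety is ensuring that the greedy selection terminates and that the piece count is additive in this way. Termination holds because the squared norm $\int\|V\|^2_{H^{-1/2+\de}}\,dt$ is additive over disjoint subintervals, so the number of pieces saturated by the norm condition is at most $M(2C_\de)^2$, and the number saturated by the length condition is at most $L$.
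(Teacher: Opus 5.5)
Your proposal is correct and follows the same route as the paper: you partition $[s,t]$ into short pieces on which the Dyson-series propagator has norm at most $2$, derive (i) and (ii) from Lemma~\ref{lem:basic UV} via concatenation/refinement of partitions and the semigroup property, and bound the number of pieces by a constant times $1+\|V\|^2_{L^2_tH^{-1/2+\de}_x}+|t-s|$ for (iii). Your greedy-partition count, using additivity of $\int\|V\|^2_{H^{-1/2+\de}_x}\,dt$, simply makes explicit the estimate on $N$ that the paper states without detail.
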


\begin{proof}
	The properties $(i)$ and $(ii)$ follow from Definition \ref{def:UV unconditional} and Lemma \ref{lem:basic UV}.
	Next we see $(iii)$.
	Let $s=r_0 < \cdots < r_{N+1} =t$ be the partitions in Definition \ref{def:UV unconditional}.
	Then, by \eqref{eq:bound of UV}, we obtain
	\begin{equation}
		\|S_V(t,s)\|_{\CB(L^2_x)} \le \prod_{j=0}^N \|S_V(r_{j+1},r_{j})\|_\CB \le 2^N.
	\end{equation}
	Since we can take $N$ such that
	$$N \ls 1+\|V\|_{L^2_t([s,t];H^{-1/2+\de}_x)}^2+|t-s|,$$
	we obtain \eqref{eq:SV bound}.
\end{proof}

\section{Bilinear Strichartz estimates for $S_V$}\label{sec:Strichartz}
Recall the well-known bilinear Strichartz estimate proved by Ozawa and Tsutsumi in \cite{Ozawa Tsutsumi 1998}.
\begin{theorem}[Ozawa--Tsutsumi]\label{th:Ozawa Tsutsumi}
	For any $f,g\in L^2_x(\R)$, we have
	\begin{equation}\label{eq:Ozawa Tsutsumi}
		\No{S(t)f \cdot \ov{S(t)g}}_{L^2_t(\R;\dot{H}^{1/2}_x)} \sim \|f\|_{L^2_x(\R)} \|g\|_{L^2_x(\R)}.
	\end{equation}
\end{theorem}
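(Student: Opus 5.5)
We prove the statement by computing both sides on the Fourier side, where the identity becomes an exact equality (in fact $\sim$ with explicit constants). Write $\wh{f},\wh{g}$ for the Fourier transforms. Since $S(t)=\CF^{-1}e^{-it\xi^2}\CF$, we have
\begin{equation}
	\CF_x\K{S(t)f\cdot\ov{S(t)g}}(\xi) = \frac{1}{\sqrt{2\pi}}\int_\R e^{-it(\eta^2-(\eta-\xi)^2)}\wh{f}(\eta)\ov{\wh{g}(\eta-\xi)}\,d\eta ,
\end{equation}
after the substitution $\zeta=\eta-\xi$ in the convolution of $\wh f$ with the Fourier transform of $\ov{S(t)g}$. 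The phase is $\eta^2-(\eta-\xi)^2=\xi(2\eta-\xi)$, which for fixed $\xi\neq0$ is affine in $\eta$. This linearity in $\eta$ is the whole point.

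We then take the Fourier transform in $t$ as well and use Plancherel in $(t,x)$. For fixed $\xi\ne0$, change variables $\ta=\xi(2\eta-\xi)$, so that $d\eta = d\ta/(2|\xi|)$. The expression above becomes the inverse time-Fourier transform of the function
\begin{equation}
	\ta\mapsto \frac{1}{\sqrt{2\pi}}\,\frac{\sqrt{2\pi}}{2|\xi|}\,\wh{f}\K{\tfrac{\ta/\xi+\xi}{2}}\ov{\wh{g}\K{\tfrac{\ta/\xi-\xi}{2}}},
\end{equation}
up to a sign convention. Plancherel in $t$ therefore gives
\begin{equation}
	\int_\R |\CF_x(S(t)f\ov{S(t)g})(\xi)|^2dt = c\int_\R \frac{1}{4|\xi|^2}\Big|\wh{f}\K{\tfrac{\ta/\xi+\xi}{2}}\Big|^2\Big|\wh{g}\K{\tfrac{\ta/\xi-\xi}{2}}\Big|^2 d\ta .
\end{equation}
Changing back to $\eta$ via $d\ta=2|\xi|\,d\eta$ yields $\frac{c}{2|\xi|}\int |\wh f(\eta)|^2|\wh g(\eta-\xi)|^2d\eta$.

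Next we multiply by the weight $|\xi|$ from the $\dot H^{1/2}_x$ norm and integrate in $\xi$. The factor $|\xi|^{-1}$ cancels exactly, and we obtain
\begin{equation}
	\No{S(t)f\cdot\ov{S(t)g}}_{L^2_t\dot H^{1/2}_x}^2 = \frac{c}{2}\int\!\!\int |\wh f(\eta)|^2|\wh g(\eta-\xi)|^2\,d\eta\, d\xi = \frac c2\|f\|_{L^2}^2\|g\|_{L^2}^2
\end{equation}
by Fubini and Plancherel. This gives two-sided equivalence, indeed equality with an absolute constant.

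The only delicate step is justifying the Fubini and Plancherel exchanges for general $L^2$ data. We first do the computation for $f,g$ Schwartz, where every integral converges absolutely, and the null set $\xi=0$ is irrelevant. The resulting identity shows that the bilinear map $(f,g)\mapsto S(t)f\,\ov{S(t)g}$ is bounded into $L^2_t\dot H^{1/2}_x$. We then extend it by density, since the equality of norms passes to limits. The $\ov{g}$ conjugation and the exact sign in the phase only affect harmless reflections in the change of variables.
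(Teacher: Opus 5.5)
Your argument is correct. It is the standard proof, and with the paper's normalization of $\CF$ it gives the exact identity $\|S(t)f\,\ov{S(t)g}\|_{L^2_t\dot H^{1/2}_x}=2^{-1/2}\|f\|_{L^2_x}\|g\|_{L^2_x}$, which matches your constant $c=1$.

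The paper does not prove this theorem itself; it is quoted from \cite{Ozawa Tsutsumi 1998}. The closest argument in the paper is the proof of Lemma~\ref{lem:1/2 gain}, which is the same computation in dual form. There the Hilbert--Schmidt norm of $\int S(t)^*V(t)S(t)\,dt$ is computed on the Fourier side. The phase $\xi^2-\eta^2$ is linearized by $r=\xi-\eta$, $c=\xi+\eta$, and the Jacobian $|r|$ produces the weight $|r|^{-1/2}$. You instead fix the output frequency $\xi$ and linearize $\xi(2\eta-\xi)$ in $\eta$, and the Jacobian $2|\xi|$ cancels the $\dot H^{1/2}$ weight.

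The two routes buy different things:
\begin{itemize}
\item Since $\langle (\int S(t)^*V(t)S(t)\,dt)\, g,f\rangle=\iint V\,S(t)g\,\ov{S(t)f}\,dx\,dt$, the paper's bound yields the $\lesssim$ half of the theorem by duality. Because it controls the Hilbert--Schmidt norm, it does so for the density of an arbitrary Hilbert--Schmidt operator, not only the rank-one operator with kernel $g(x)\ov{f(y)}$.
\item Your direct route gives the rank-one case together with the reverse inequality and the sharp constant, with no duality step. Only the rank-one upper bound is used later, in Lemma~\ref{lem:multilinear Strichartz}.
\end{itemize}

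One detail in your last paragraph needs a sentence. When you extend by density, say why the $L^2_t\dot H^{1/2}_x$-limit is the pointwise product $S(t)f\,\ov{S(t)g}$. The approximating products converge in $L^\infty_tL^1_x$, so their spatial Fourier transforms converge uniformly, which identifies the limit.

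Alternatively, you can skip the density step entirely. For every $\xi$, the function $\eta\mapsto\wh f(\eta)\ov{\wh g(\eta-\xi)}$ is in $L^1$ by Cauchy--Schwarz. For a.e. $\xi$ it is also in $L^2$ by Fubini, since $\iint|\wh f(\eta)|^2|\wh g(\eta-\xi)|^2\,d\eta\,d\xi=\|f\|_{L^2}^2\|g\|_{L^2}^2$. That is all the Plancherel step in $t$ requires.
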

Note that the standard Strichartz estimate implies
\begin{equation}\label{eq:Standard Strichartz}
		\No{S(t)f \cdot \ov{S(t)g}}_{L^2_t(I;L^2_x)} \ls |I|^{1/4} \|f\|_{L^2_x(\R)} \|g\|_{L^2_x(\R)}.
\end{equation}
Collecting \eqref{eq:Ozawa Tsutsumi} and \eqref{eq:Standard Strichartz}, we have
\begin{equation}\label{eq:bilinear on I}
	\No{S(t)f \cdot \ov{S(t)g}}_{L^2_t(I;H^{1/2}_x)} \ls (1+|I|^{1/4})\|f\|_{L^2_x(\R)} \|g\|_{L^2_x(\R)}.
\end{equation}
In the sequel, we generalize \eqref{eq:bilinear on I} to the perturbed flow $S_V$.
Our first observation is
\begin{lemma}\label{lem:multilinear Strichartz}
	Let $0<\de<1$ and $\th_\de = \min(1/4,\de/2)$.
	Let $0<T\le 1$ and $n,m\ge 0$. Then, we have the multilinear estimate
	\begin{equation}\label{eq:mutlilinear Strichartz 2}
		\begin{aligned}
			&\No{S(t)W_{V_1,\dots,V_n}^{(n)}(t)f \cdot \ov{S(t)W_{V'_1,\dots,V'_m}^{(m)}(t) g}}_{L^2_t([0,T];H^{1/2}_x(\R))} \\
			&\quad \le C_\de^{n+m} T^{(n+m)\th_\de}\prod_{j=1}^n \|V_j\|_{L^2_t([0,T];H^{-1/2+\de}_x(\R))}\prod_{k=1}^m \|V_k'\|_{L^2_t([0,T];H^{-1/2+\de}_x(\R))}\|f\|_{L^2_x(\R)} \|g\|_{L^2_x(\R)}.
		\end{aligned}
	\end{equation}
\end{lemma}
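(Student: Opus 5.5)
The plan is to reduce the estimate to the case with no time ordering. In that case $t$ enters only through the free flow $S(t)$, so Theorem \ref{th:Ozawa Tsutsumi} and \eqref{eq:Standard Strichartz} apply directly. The time ordering will then be restored with the Christ--Kiselev type lemma (Lemma \ref{lem:CK}), exactly as in the proof of Lemma \ref{lem:multilinear bound}.

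\emph{Step 1 (unordered bound).} Fix an interval $I\subset[0,T]$. Put $F:=W^{(n),+}_{V_1,\dots,V_n}(\sup I,\inf I)f$ and $G:=W^{(m),+}_{V'_1,\dots,V'_m}(\sup I,\inf I)g$; these are fixed elements of $L^2_x$. By \eqref{eq:bilinear on I} with $|I|\le 1$,
\begin{equation}
\No{S(t)F\cdot\ov{S(t)G}}_{L^2_t(I;H^{1/2}_x)}\ls \|F\|_{L^2_x}\|G\|_{L^2_x}.
\end{equation}
The first display in the proof of Lemma \ref{lem:multilinear bound} (Lemma \ref{lem:1/2 gain} applied factor by factor) bounds $\|F\|_{L^2_x}\|G\|_{L^2_x}$ by
\begin{equation}
C^{n+m}\prod_j\|V_j\|_{X}\prod_k\|V'_k\|_X\|f\|_{L^2_x}\|g\|_{L^2_x},
\end{equation}
where $X=L^{p_\de}_t\dot H^{-1/2+\de}_x$ or $X=L^{4/3}_tL^2_x$. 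This gives a multilinear map from $(V_1,\dots,V_n,V'_1,\dots,V'_m)$ into $L^2_t(I;H^{1/2}_x)$. Its integrand is the product of the kernels $S(t_j)^*V_j(t_j)S(t_j)$ and $S(s_k)^*V'_k(s_k)S(s_k)$, integrated over unrestricted times.

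\emph{Step 2 (restoring the ordering).} The target quantity $S(t)W^{(n)}(t)f\cdot\ov{S(t)W^{(m)}(t)g}$ is the same multilinear expression restricted to the region $t>t_1>\cdots>t_n>0$ and $t>s_1>\cdots>s_m>0$. This region is a partial order, since the two chains may interleave. I would split it into the at most $\binom{n+m}{n}\le 2^{n+m}$ total orderings of $\{t_j\}\cup\{s_k\}$, each lying below the output time $t$. For each total ordering I apply Lemma \ref{lem:CK}, treating the output variable $t$ as the last time variable with exponent $2$. The input exponents are $p_\de=2/(1+\de)<2$, resp.\ $4/3<2$, so the summability condition of Christ--Kiselev type holds, and the constants grow at most geometrically in $n+m$. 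Summing over orderings only enlarges $C_\de$.

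\emph{Step 3 (Hölder).} As in Lemma \ref{lem:multilinear bound}, Hölder in time on $[0,T]$ gives $\|V\|_{L^{p_\de}_t\dot H^{-1/2+\de}_x}\le T^{\de/2}\|V\|_{L^2_t\dot H^{-1/2+\de}_x}$ and $\|V\|_{L^{4/3}_tL^2_x}\le T^{1/4}\|V\|_{L^2_tL^2_x}$. Splitting each $V_j$ and $V'_k$ into low and high frequencies, so as to use the appropriate space for each, yields $C_\de^{n+m}T^{(n+m)\th_\de}\prod\|V_j\|_{L^2_tH^{-1/2+\de}_x}\prod\|V'_k\|_{L^2_tH^{-1/2+\de}_x}$.

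\emph{Main obstacle.} The hardest step is Step 2. Lemma \ref{lem:CK} must apply with a vector-valued output $L^2_t(I;H^{1/2}_x)$ whose dependence on the top time $t$ is bilinear, through two independent chains, rather than linear. I would first check that the lemma is stated for general multilinear operators with an ordered time variable. If it is not, I would prove the needed version by the usual Whitney-type dyadic decomposition of the ordered simplex. For this I need the unordered bound of Step 1 to hold on arbitrary product subintervals, which it does because Step 1 was proved for an arbitrary interval $I$.
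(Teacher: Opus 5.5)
Your proposal is correct and follows the paper's proof. Both get the unordered bound from Ozawa--Tsutsumi together with Lemma \ref{lem:1/2 gain} applied factor by factor, restore the time ordering with the Christ--Kiselev lemmas (Lemmas \ref{lem:CK 0} and \ref{lem:CK}), and finish with H\"older in time plus a low/high frequency split. The only difference is bookkeeping: the paper never splits into the $\binom{n+m}{n}$ interleavings. It first orders the first chain while the second factor is the fixed free evolution $S(t)W^{(m),+}_{V'_1,\dots,V'_m}(T)g$, then orders the second chain with the already ordered first factor absorbed into the kernel. Each step is therefore an ordinary linear or bilinear Christ--Kiselev application, so your ``main obstacle'' never arises.
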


\begin{proof}
	Let $p_\de = 2/(1+\de)$.
	By Theorem \ref{th:Ozawa Tsutsumi} and Lemma \ref{lem:1/2 gain}, we have
	\begin{align}
		&\No{S(t)W_{V_1,\dots,V_n}^{(n),+}(T)f \cdot \ov{S(t)W_{V'_1,\dots,V'_m}^{(m),+}(T) g}}_{L^2_t([0,T];H^{1/2}_x)} \\
		&\quad \le C^{n+m} \prod_{j=1}^n \|V_j\|_{L^{p_\de}_t([0,T];\dot{H}^{-1/2+\de}_x)}\prod_{k=1}^m \|V_k'\|_{L^{p_\de}_t([0,T];\dot{H}^{-1/2+\de}_x)}\|f\|_{L^2_x} \|g\|_{L^2_x}.
	\end{align}
	Hence, by Lemmas \ref{lem:CK 0} and \ref{lem:CK}, we have
	\begin{align}
		&\No{S(t)W_{V_1,\dots,V_n}^{(n)}(t)f \cdot \ov{S(t)W_{V'_1,\dots,V'_m}^{(m),+}(T) g}}_{L^2_t([0,T];H^{1/2}_x)} \\
		&\quad \le C_\de^{n} C^m \prod_{j=1}^n \|V_j\|_{L^{p_\de}_t([0,T];\dot{H}^{-1/2+\de}_x)}\prod_{k=1}^m \|V_k'\|_{L^{p_\de}_t([0,T];\dot{H}^{-1/2+\de}_x)}\|f\|_{L^2_x} \|g\|_{L^2_x}.
	\end{align}
	Again, by Lemmas \ref{lem:CK 0} and \ref{lem:CK}, we obtain
	\begin{align}
		&\No{S(t)W_{V_1,\dots,V_n}^{(n)}(t)f \cdot \ov{S(t)W_{V'_1,\dots,V'_m}^{(m)}(t) g}}_{L^2_t([0,T];H^{1/2}_x)} \\
		&\quad \le C_\de^{n+m} \prod_{j=1}^n \|V_j\|_{L^{p_\de}_t([0,T];\dot{H}^{-1/2+\de}_x)}\prod_{k=1}^m \|V_k'\|_{L^{p_\de}_t([0,T];\dot{H}^{-1/2+\de}_x)}\|f\|_{L^2_x} \|g\|_{L^2_x}.
	\end{align}
	Applying the H\"older inequality
	\begin{equation}
		\|f\|_{L^{p_\de}_t([0,T])} \le T^{\de/2} \|f\|_{L^2_t([0,T])} \le T^{\th_\de} \|f\|_{L^2_t([0,T])},
	\end{equation}
	we obtain
	\begin{equation}\label{eq:p_delta}
		\begin{aligned}
			&\No{S(t)W_{V_1,\dots,V_n}^{(n)}(t)f \cdot \ov{S(t)W_{V'_1,\dots,V'_m}^{(m)}(t) g}}_{L^2_t([0,T];H^{1/2}_x)} \\
			&\quad \le C_\de^{n+m} T^{(n+m)\th_\de} \prod_{j=1}^n \|V_j\|_{L^{p_\de}_t([0,T];\dot{H}^{-1/2+\de}_x)}\prod_{k=1}^m \|V_k'\|_{L^{p_\de}_t([0,T];\dot{H}^{-1/2+\de}_x)}\|f\|_{L^2_x} \|g\|_{L^2_x}.
		\end{aligned}
	\end{equation}
	Similarly, we can prove
	\begin{equation}\label{eq:p=2}
		\begin{aligned}
			&\No{S(t)W_{V_1,\dots,V_n}^{(n)}(t)f \cdot \ov{S(t)W_{V'_1,\dots,V'_m}^{(m)}(t) g}}_{L^2_t([0,T];H^{1/2}_x)} \\
			&\quad \le C_\de^{n+m} T^{(n+m)\th_\de} \prod_{j=1}^n \|V_j\|_{L^2_t([0,T];L^2_x)}\prod_{k=1}^m \|V_k'\|_{L^2_t([0,T];L^2_x)}\|f\|_{L^2_x} \|g\|_{L^2_x}.
		\end{aligned}
	\end{equation}
	Combining \eqref{eq:p_delta} and \eqref{eq:p=2}, we obtain the desired estimate.
\end{proof}

Now we get one of the key estimates in this paper.
\begin{proposition}[Key estimate]\label{prop:key}
	Let $0<\de<1$ and $\th_\de = \min(1/4,\de/2)$.
	Let $0<T\le 1$.
	Assume that $V,V'$ satisfy
	$$C_\de T^{\th_\de} \max\K{\|V\|_{L^2_t([0,T];H^{-1/2+\de}_x(\R))}, \|V'\|_{L^2_t([0,T];H^{-1/2+\de}_x(\R))}}\le \tw,$$
	where $C_\de>0$ is the same constant as in Lemma \ref{lem:multilinear Strichartz}.
	Then, we have
	\begin{align}
		&\No{|S_V(t)\phi|^2}_{L^2_t([0,T];H^{1/2}_x(\R))}
		\le 4\|\phi\|_{L^2_x(\R)}^2 \label{eq:a priori}\\
		&\No{|S_V(t)\phi|^2 - |S_{V'}(t)\phi|^2}_{L^2_t([0,T];H^{1/2}_x(\R))}
		\le 16 C_\de T^{\th_\de}\|V-V'\|_{L^2_t ([0,T];H^{-1/2+\de}_x)}\|\phi\|_{L^2_x}^2.\label{eq:difference}
	\end{align}
\end{proposition}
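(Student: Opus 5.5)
We expand both factors of $|S_V(t)\phi|^2$ via the Dyson series and apply Lemma \ref{lem:multilinear Strichartz} term by term. Since $S_V(t)=S(t)\sum_{n\ge 0}W_V^{(n)}(t)$ with $s=0$, $S(0)^*=\Id$, we will write
\begin{equation}
	|S_V(t)\phi|^2 = \sum_{n,m\ge 0} S(t)W_V^{(n)}(t)\phi\cdot\ov{S(t)W_V^{(m)}(t)\phi}.
\end{equation}
The smallness hypothesis (with $C_\de$ the constant of Lemma \ref{lem:multilinear Strichartz}, which we may take no smaller than that of Corollary \ref{cor:wave operator}) ensures that the series converges absolutely in $L^\I_t\CB$. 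We will justify the identity for each $t$ in $L^1_x$, and then as an identity in $L^2_tH^{1/2}_x$ by absolute convergence of the right-hand side in that norm.

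For \eqref{eq:a priori}, Lemma \ref{lem:multilinear Strichartz} with all $V_j=V'_k=V$ bounds the $(n,m)$ term by $(C_\de T^{\th_\de}\|V\|)^{n+m}\|\phi\|_{L^2}^2\le 2^{-n-m}\|\phi\|_{L^2}^2$. Summing over $n,m\ge 0$ gives $(\sum_n 2^{-n})^2=4$, as required.

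For \eqref{eq:difference}, we split the difference as
\begin{equation}
	|S_V\phi|^2-|S_{V'}\phi|^2=(S_V\phi-S_{V'}\phi)\ov{S_V\phi}+S_{V'}\phi\,\ov{(S_V\phi-S_{V'}\phi)}.
\end{equation}
We expand $S_V\phi-S_{V'}\phi=S(t)\sum_{n\ge1}(W_V^{(n)}-W_{V'}^{(n)})\phi$ and telescope $W^{(n)}_V-W^{(n)}_{V'}$ into $n$ multilinear terms of the form $W^{(n)}_{V',\dots,V',V-V',V,\dots,V}$, exactly as in the proof of Lemma \ref{lem:difference estimate}. Applying Lemma \ref{lem:multilinear Strichartz} to each resulting pair, the first product is bounded by
\begin{equation}
	C_\de T^{\th_\de}\|V-V'\|\,\|\phi\|^2\sum_{n\ge1}n2^{-(n-1)}\sum_{m\ge0}2^{-m}=C_\de T^{\th_\de}\|V-V'\|\|\phi\|^2\cdot4\cdot2=8C_\de T^{\th_\de}\|V-V'\|\|\phi\|^2 .
\end{equation}
The second product gives the same bound, for a total of $16$.

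The main obstacle is justifying the term-by-term expansion in $L^2_tH^{1/2}_x$, since the product is only defined pointwise in $t$. We will argue by density: for $V,V'\in C_c^\I$ everything is classical. In general, the partial sums converge absolutely in $L^2_tH^{1/2}_x$ by the bounds above, and they also converge in $L^\I_tL^1_x$ to $|S_V\phi|^2$ via the operator-norm convergence of the Dyson series. The two limits therefore coincide as distributions.
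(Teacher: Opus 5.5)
Your proposal is correct and follows the paper's proof essentially step for step: you expand both factors in the Dyson series, apply Lemma \ref{lem:multilinear Strichartz} term by term after telescoping $W^{(n)}_V-W^{(n)}_{V'}$, and the geometric sums give $4$ and $8+8=16$ (your splitting $|a|^2-|b|^2=(a-b)\ov{a}+b\,\ov{(a-b)}$ differs only cosmetically from the paper's). What you add is an explicit justification of the term-by-term expansion, which the paper leaves implicit: you identify the $L^2_tH^{1/2}_x$ limit of the absolutely convergent double series with the $L^\I_tL^1_x$ limit coming from operator-norm convergence of the Dyson series, and you take the constant of Lemma \ref{lem:multilinear Strichartz} at least as large as that of Corollary \ref{cor:wave operator}.
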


\begin{remark}
	Estimate \eqref{eq:a priori} is a generalization of \eqref{eq:bilinear on I} for the perturbed propagator, while \eqref{eq:difference} gives the stability with respect to the potential. In Section \ref{sec:proof}, we apply these estimates with $V=a(D)\varrho$ to solve the density equation by a contraction mapping argument. 
\end{remark}

\begin{proof}
By Lemma \ref{lem:multilinear Strichartz}, we have 
\begin{align}
	&\No{|S_V(t)\phi|^2}_{L^2_t([0,T];H^{1/2}_x)}
	       \le \sum_{n,m\ge 0} \No{S(t)W_V^{(n)}(t)\phi \cdot \ov{S(t)W_V^{(m)}(t)\phi}}_{L^2_t([0,T];H^{1/2}_x)} \\
	&\quad \le \sum_{n,m\ge 0} C_\de^{n+m} T^{(n+m)\th_\de}\|V\|_{L^2_t([0,T];H^{-1/2+\de}_x)}^{n+m} \|\phi\|_{L^2_x}^2 \le 4\|\phi\|_{L^2_x}^2,
\end{align}
which proves \eqref{eq:a priori}.
Next, we have
\begin{align}
	&\No{|S_V(t)\phi|^2 - |S_{V'}(t)\phi|^2}_{L^2_t([0,T];H^{1/2}_x)} \\
	&\quad \le \No{S_V(t)\phi \cdot \ov{(S_V(t)-S_{V'}(t))\phi}}_{L^2_t([0,T];H^{1/2}_x)}
	    + \No{(S_V(t) - S_{V'}(t))\phi \cdot \ov{S_{V'}(t)\phi}}_{L^2_t([0,T];H^{1/2}_x)} \\
	&\quad =:\SA+\SB.
\end{align}
We only estimate $\SA$ because we can deal with $\SB$ in the same way.
By Lemma \ref{lem:multilinear Strichartz}, we have
\begin{align}
	\SA&\le \sum_{n\ge 0, m\ge 1} \No{S(t)W_{V,\dots,V}^{(n)}(t)\phi\cdot\ov{S(t)(W_{V,\dots,V}^{(m)}(t) - W_{V',\dots,V'}^{(m)}(t))\phi}}_{L^2_t([0,T];H^{1/2}_x)} \\
	&\le \sum_{n\ge 0, m\ge 1} \bigg(\No{S(t)W_{V,\dots,V}^{(n)}(t)\phi\cdot\ov{S(t)W_{V-V',V,\dots,V}^{(m)}(t)\phi}}_{L^2_t([0,T];H^{1/2}_x)} \\
	&\quad + \No{S(t)W_{V,\dots,V}^{(n)}(t)\phi\cdot\ov{S(t)W_{V',V-V',V,\dots,V}^{(m)}(t)\phi}}_{L^2_t([0,T];H^{1/2}_x)} + \cdots \\
	&\quad\cdots  + \No{S(t)W_{V,\dots,V}^{(n)}(t)\phi\cdot\ov{S(t)W_{V',\dots,V',V-V'}^{(m)}(t)\phi}}_{L^2_t([0,T];H^{1/2}_x)}\bigg) \\
	&\le C_\de T^{\th_\de}\|V-V'\|_{L^2_t([0,T];H^{-1/2+\de}_x)} \|\phi\|_{L^2_x}^2 \sum_{n\ge 0, m\ge 1} \frac{m}{2^{n+m-1}} \\
	&= 8C_\de T^{\th_\de} \|V-V'\|_{L^2_t([0,T];H^{-1/2+\de}_x)} \|\phi\|_{L^2_x}^2.
\end{align}
\end{proof}

\section{Proof of the well-posedness result}\label{sec:proof}
\subsection{Local well-posedness}
In this section, we give a proof of Theorem \ref{th:main}.
First, we give a rigorous definition of the solution.
Since the usual Duhamel formulation is not available, we do not define the solution directly through the nonlinear equation. Instead, we first solve the closed equation for the density
$$\varrho(t) = |S_{a(D)\varrho}(t)\phi|^2.$$
Then, we  recover the true solution by $u(t)=S_{a(D)\varrho}(t)\phi$.
This argument has been used in the context of the Hartree equation for infinitely many particles. For example, see \cite{Lewin Sabin 2014 APDE, Chen et al 2018}.

\begin{definition}[Definition of the solution to \eqref{eq:NLS}]\label{def:solution}
	Let $-\I< t_0 < t_1 \le \I$ and $I=[t_0,t_1)$.
	Let $0<\de<1$.
	We call $u(t) \in C(I;L^2_x(\R))$ a solution to \eqref{eq:NLS} with initial condition $u(t_0)=\phi$ if the following holds: There exists $\varrho \in L^2_{t,\loc}(I;H^{1/2}_x(\R))$ such that
	$$\varrho(t) = |S_{a(D)\varrho}(t,t_0)\phi|^2 \quad \text{in } L^2_{t,\loc}([t_0,t_1);H^{1/2}_x(\R)),$$
	and $u(t)\in C(I;L^2_x(\R))$ is given by $u(t)=S_{a(D)\varrho}(t,t_0)\phi$.
\end{definition}

\begin{remark}
	Since $\varrho \in L^2_{t,\loc}(I;H^{1/2}_x(\R))$, \eqref{eq:assmption for m} implies $a(D)\varrho \in L^2_{t,\loc}(I;H^{-1/2+\de}_x(\R))$.
	Therefore, the propagator $(S_{a(D)\varrho}(t,s))_{t_0\le s\le t <t_1}$ is well-defined.
	Moreover, $S_{a(D)\varrho}(t,t_0)$ is strongly continuous with respect to $t$, and $u:I \to L^2_x(\R)$ is also continuous.
\end{remark}

\begin{remark}
	It is natural to ask how Definition \ref{def:solution} is related to the classical notion of solution.
	Since $u=S_{a(D)|u|^2}(t)\phi$, $u(t)$ satisfies
	\begin{equation}
		u(t) = S(t)\sum_{n\ge 0} W_{a(D)|u|^2}^{(n)}(t) \phi \overset{\textup{formally}}{=} S(t)\phi - i\int_0^t S(t-\ta)\K{a(D)|u(\ta)|^2 u(\ta)}d\ta,
	\end{equation}
	where the second equality holds only formally in general because the last term does not always make sense.
	However, if the initial data $\phi$ is sufficiently smooth, then $u$ and $a(D)|u|^2$ are also nice functions; hence the last formal equality actually holds.
	Thus, Definition \ref{def:solution} is consistent with the standard Duhamel integral formulation whenever the latter is available.
\end{remark}

Now we prove local well-posedness.
\begin{proposition}[Local well-posedness]\label{prop:LWP}
	Assume that $a:\R\to \C$ satisfies \eqref{eq:assmption for m} with $0<\de<1$.
	Then, for any $\phi \in L^2_x(\R)$, there exists $T=T(a,\de,\|\phi\|_{L^2_x(\R)}) > 0$ such that the following hold.
	\begin{itemize}
		\item[$(i)$] There exists a unique solution $u(t)\in C([0,T];L^2_x(\R))$ to \eqref{eq:NLS} with initial condition $u(0)=\phi$ such that
		$|u(t)|^2 \in L^2_t([0,T];H^{1/2}_x(\R))$.
		\item[$(ii)$] For any $R>0$ and $T=T(a,\de,R)>0$, the data-to-solution map
		\begin{equation}
			\{\ph\in L^2(\R):\|\ph\|_{L^2(\R)} \le R\} \ni \phi \to (u,\varrho) \in C([0,T];L^2_x(\R))\times L^2_t([0,T];H^{1/2}_x(\R))   
		\end{equation}
		is Lipschitz continuous.
		\item[$(iii)$] If $a$ is a real-valued function, we have the mass conservation law, that is, 
		$$\|u(t)\|_{L^2_x(\R)} = \|\phi\|_{L^2_x(\R)}$$
		for all $t\in[0,T]$.
	\end{itemize}
\end{proposition}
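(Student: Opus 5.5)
The plan is to run a contraction argument for the map $\Phi[\varrho](t):=|S_{a(D)\varrho}(t)\phi|^2$ on the ball
$$X_T:=\{\varrho\in L^2_t([0,T];H^{1/2}_x):\|\varrho\|_{L^2_tH^{1/2}_x}\le 4\|\phi\|_{L^2_x}^2\}.$$
By \eqref{eq:assmption for m}, $\|a(D)\varrho\|_{L^2_tH^{-1/2+\de}_x}\le K_a\|\varrho\|_{L^2_tH^{1/2}_x}$ with $K_a:=\sup_\xi|a(\xi)|\lxr^{-(1-\de)}$. I would choose $T\le1$ so small that $C_\de T^{\th_\de}K_a\cdot 4R^2\le\tw$ and $16C_\de T^{\th_\de}K_aR^2\le\tw$, where $R\ge\|\phi\|_{L^2}$. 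For $\varrho\in X_T$ the potential $V=a(D)\varrho$ then satisfies the smallness hypothesis of Proposition \ref{prop:key}, and $S_V(t)$ coincides with the Dyson-series operator of Definition \ref{def:UV}.

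\eqref{eq:a priori} gives $\Phi:X_T\to X_T$. \eqref{eq:difference} with $V-V'=a(D)(\varrho-\varrho')$ gives $\|\Phi[\varrho]-\Phi[\varrho']\|\le 16C_\de T^{\th_\de}K_a\|\phi\|^2\|\varrho-\varrho'\|\le\tw\|\varrho-\varrho'\|$. Banach's fixed point theorem then yields a unique $\varrho\in X_T$, and $u(t)=S_{a(D)\varrho}(t)\phi\in C([0,T];L^2)$ by Lemma \ref{lem:basic property of UV norm}(ii). This is the solution in the sense of Definition \ref{def:solution}, with $|u|^2=\varrho\in L^2_tH^{1/2}_x$.

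\textbf{Uniqueness.} The main subtlety is that an arbitrary solution with $|u|^2\in L^2_tH^{1/2}_x$ need not lie in the ball $X_T$. Given another solution with density $\varrho'$, I would take $\tau\le T$ so small that $\|\varrho'\|_{L^2([0,\tau];H^{1/2})}$ satisfies the smallness bound; this is possible by absolute continuity of the integral. Then \eqref{eq:a priori} forces $\varrho'|_{[0,\tau]}\in X_\tau$, so $\varrho'=\varrho$ on $[0,\tau]$. To continue, let $\tau^*$ be the supremum of times of agreement. At $\tau^*$ both solutions agree, by the semigroup property (i) and continuity. I then repeat the argument starting from $\tau^*$, using the propagator $S_V(t,\tau^*)$ and the time-translated versions of Proposition \ref{prop:key}. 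The estimates are translation invariant, since Lemma \ref{lem:multilinear Strichartz} only uses the free flow, whose identities transport via $S_V(t,\tau^*)=S(t)(\cdots)S(\tau^*)^*$. This gives $\tau^*=T$.

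\textbf{Lipschitz dependence.} For two data $\phi,\psi$ with norms $\le R$, I write
$$\varrho_\phi-\varrho_\psi=\big(|S_{V_\phi}\phi|^2-|S_{V_\phi}\psi|^2\big)+\big(|S_{V_\phi}\psi|^2-|S_{V_\psi}\psi|^2\big).$$
The first term is bounded by the bilinear (polarized) form of the series in Proposition \ref{prop:key}, giving $\le 4(\|\phi\|+\|\psi\|)\|\phi-\psi\|$. The second is bounded by \eqref{eq:difference}, giving $\le\tw\|\varrho_\phi-\varrho_\psi\|$, which is absorbed into the left side. For $u$, I use $\|S_{V_\phi}(t)\phi-S_{V_\psi}(t)\psi\|\le 2\|\phi-\psi\|+\|S_{V_\phi}-S_{V_\psi}\|_\CB R$ together with Lemma \ref{lem:difference estimate} and the $\varrho$-bound.

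\textbf{Mass conservation.} For real $a$ the potential $V$ is real. For smooth real $V_j$ the propagator $S_{V_j}$ is unitary, and the approximation in Lemma \ref{lem:basic UV} via Lemma \ref{lem:difference estimate} passes unitarity to the limit in operator norm. Real smooth approximants exist by taking real parts of mollifications. Hence $\|u(t)\|=\|\phi\|$.
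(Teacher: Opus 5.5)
Your parts (i) and (ii) are sound and follow the paper's route: the contraction for $\Phi[\varrho]=|S_{a(D)\varrho}(t)\phi|^2$ via Proposition \ref{prop:key}, then Lemma \ref{lem:difference estimate} for $u$. Your continuation argument for uniqueness outside the ball, and your absorption argument for the Lipschitz bound on $\varrho$, correctly supply what the paper calls ``the standard argument''.

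\textbf{The gap is in (iii).} The assertion ``for real $a$ the potential $V$ is real'' is false. Since $\varrho$ is real, $\overline{a(D)\varrho}$ has Fourier transform $\overline{a(-\xi)}\,\wh\varrho(\xi)$. So $a(D)\varrho$ is real-valued only when $a(\xi)=\overline{a(-\xi)}$, which for real $a$ means $a$ is even.

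Admissible real symbols need not be even. Examples are $a(\xi)=(|\xi|+\xi)\lxr^{-\de}$, the subcritical analogue of CM-DNLS, and $a(\xi)=\xi\lxr^{-\de}$. In the latter case $V=-i\pl_x\sd^{-\de}\varrho$ is purely imaginary. This is why Section \ref{sec:UV} explicitly works with complex-valued $V$.

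For such $V$ the operator $S_V(t)$ is not unitary. Already for smooth $V$ one has $\frac{d}{dt}\|S_V(t)\phi\|_{L^2}^2=2\Im\int_\R V|S_V(t)\phi|^2\,dx$, which is nonzero in general. Hence no argument using only properties of the linear propagator can yield $\|u(t)\|_{L^2}=\|\phi\|_{L^2}$. Approximating by ``real parts of mollifications'' also changes the potential, so it does not approximate $V$.

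Mass conservation is a nonlinear cancellation that relies on the self-consistency $V=a(D)|u|^2$. One way to carry it out, which is what the paper does:
\begin{enumerate}
\item[(a)] Approximate $V$ by smooth $V_j$ and write $\|u_j(t)\|^2=\|\phi\|^2+2\Im\int_0^t\int_\R V_j|u_j|^2\,dx\,d\ta$ for $u_j=S_{V_j}(t)\phi$.
\item[(b)] Pass to the limit using $V_j\to V$ in $L^2_tH^{-1/2+\de}_x$, $|u_j|^2\to|u|^2$ in $L^2_tH^{1/2}_x$ (by \eqref{eq:difference}), and $u_j(t)\to u(t)$ in $L^2_x$ (by Lemma \ref{lem:difference estimate}). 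The pairing is understood as the $H^{-1/2+\de}$--$H^{1/2-\de}$ duality.
\item[(c)] Conclude by observing that $\int_\R a(D)\varrho\cdot\varrho\,dx=\int_\R a(\xi)|\wh\varrho(\xi)|^2\,d\xi\in\R$, because $\varrho=|u|^2$ is real and $a$ is real.
\end{enumerate}
As written, your argument proves (iii) only for even real symbols.
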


\begin{proof}
	\noindent\textbf{Step 1: Unique existence of the local-in-time solution.}
Let $R>0$ be sufficiently large such that $R\ge 4\|\phi\|_{L^2_x}^2$.
Let $T=T(a,\de,R)=T(a,\de,\|\phi\|_{L^2_x})>0$ be sufficiently small such that
\begin{equation}
	C_\de T^{\th_\de} R \sup_{\xi\in \R} \frac{|a(\xi)|}{\lxr^{1-\de}} \le \tw, \quad T\le 1,
\end{equation}
where $C_\de$ is a constant depending only on $\de>0$.
Let $\Ph[\varrho](t):=|S_{a(D)\varrho}(t)\phi|^2$.
	Define $E(T,R)$ by
	$$E(T,R):=\Ck{\varrho: \|\varrho\|_{L^2_t([0,T];H^{1/2}_x)} \le R}.$$
	Let $\varrho \in E(T,R)$.
	Since
	\begin{equation}
		C_\de T^{\th_\de} \|a(D)\varrho\|_{L^2_t([0,T];H^{-1/2+\de}_x)} \le 
		C_\de T^{\th_\de}\sup_{\xi\in\R}\frac{|a(\xi)|}{\lxr^{1-\de}} \|\varrho\|_{L^2_t([0,T];H^{1/2}_x)} \le \tw,
	\end{equation}
	Proposition \ref{prop:key} implies
	\begin{equation}
		\|\Ph[\varrho]\|_{L^2_t([0,T];H^{1/2}_x)} \le 4 \|\phi\|_{L^2_x}^2 \le R.
	\end{equation}
	Therefore, the map $\Ph:E(T,R) \to E(T,R)$ is well-defined.
	Similarly, by Proposition \ref{prop:key}, we have
	\begin{align}
		&\|\Ph[\varrho]-\Ph[\varrho']\|_{L^2_t([0,T];H^{1/2}_x)}
		\le C_\de T^{\th_\de}\|a(D)\varrho - a(D)\varrho'\|_{L^2_t([0,T];H^{-1/2+\de}_x)} \|\phi\|_{L^2_x}^2 \\
		&\quad \le C_\de T^{\th_\de} R \sup_{\xi\in \R} \frac{|a(\xi)|}{\lxr^{1-\de}} \|\varrho-\varrho'\|_{L^2_t([0,T];H^{1/2}_x)}
		\le \tw \|\varrho-\varrho'\|_{L^2_t([0,T];H^{1/2}_x)}.
	\end{align}
	Hence, $\Ph:E(T,R)\to E(T,R)$ is a contraction mapping, and the fixed point theorem ensures that there exists at least one solution $\varrho \in L^2_t([0,T];H^{1/2}_x)$.
	The uniqueness of $\varrho$ follows from the standard argument.
	Hence, we obtain the true solution $u(t):=S_{a(D)\varrho}(t)\phi$.
	The uniqueness of $\varrho$ implies the uniqueness of $u$.
	
	\noindent\textbf{Step 2: Lipschitz continuity.}
	We can prove the Lipschitz continuity of data-to-density map
	\begin{equation}
		\Ck{\ph:\|\ph\|_{L^2_x}\le R} \ni \phi \mapsto \varrho \in L^2_t([0,T];H^{1/2}_x)
	\end{equation}
	by the standard argument.
	Let $\phi,\phi' \in \{\ph:\|\ph\|_{L^2_x}\le R\}$ and $\varrho,\varrho'\in L^2_t([0,T];H^{1/2}_x)$ be corresponding solutions.
	Let $u(t)=S_{a(D)\varrho}(t)\phi$ and $u'(t)=S_{a(D)\varrho'}(t)\phi'$.
	Then, by Lemma \ref{lem:difference estimate} and Lipschitz continuity of data-to-density map, we have
	\begin{align}
		&\sup_{t\in [0,T]}\|S_{a(D)\varrho}(t)\phi - S_{a(D)\varrho'}(t)\phi'\|_{L^2_x} \\
		&\quad \le \sup_{t\in [0,T]}\|S_{a(D)\varrho}(t)-S_{a(D)\varrho'}(t)\|_{\CB} \|\phi\|_{L^2_x}
		         + \sup_{t\in[0,T]} \|S_{a(D)\varrho'}(t)\|_{\CB} \|\phi-\phi'\|_{L^2_x} \\
		&\quad \le C_\de T^{\th_\de} R \sup_{\xi\in\R}\frac{|a(\xi)|}{\lxr^{1-\de}} \|\varrho-\varrho'\|_{L^2_t([0,T];H^{1/2}_x)} + 2 \|\phi-\phi'\|_{L^2_x}
		\ls_{a,\de,R} \|\phi-\phi'\|_{L^2_x},
	\end{align}
	where we used \eqref{eq:bound of UV}.
	
	\noindent \textbf{Step 3: Conservation of $L^2(\R)$ norm.}
	Note that $C_c^\I([0,T]\times\R) \subset L^2_t([0,T];H^{-1/2+\de}_x)$ is a dense subset.
	Let $V := a(D)|u|^2 \in L^2_t([0,T];H^{-1/2+\de}_x)$. Then, we can choose $V_j \in C_c^\I([0,T]\times\R)$ such that 
	$V_j \to V$ in $L^2_t([0,T];H^{-1/2+\de}_x)$ as $j\to \I$. 
	Let $u_j(t):=S_{V_j}(t)\phi$. Then, by the standard argument, we have
	\begin{equation}
		\|u_j(t)\|_{L^2_x}^2 = \|\phi\|_{L^2_x}^2 - 2\Im\Dk{\int_0^t \int_\R \sd^{-1/2+\de}V_j(\ta,x) \cdot \sd^{1/2-\de} |u_j(\ta,x)|^2 dxd\ta}.
	\end{equation}
	Since the difference estimate \eqref{eq:difference} implies
	\begin{equation}
		|u_j|^2 \to |u|^2 \quad \text{in } L^2_t([0,T];H^{1/2}_x) \text{ as } j \to \I,
	\end{equation}
	we obtain
	\begin{equation}
		\|u(t)\|_{L^2_x}^2 = \|\phi\|_{L^2_x}^2 - 2\Im\Dk{\int_0^t \int_\R \sd^{-1/2+\de}V(\ta,x) \cdot \sd^{1/2-\de} |u(\ta,x)|^2 dxd\ta}.
	\end{equation}
	Finally, since $a(\xi)$ is real-valued, we have
	\begin{align}
		&\int_\R \sd^{-1/2+\de}V(\ta,x) \cdot \sd^{1/2-\de} |u(\ta,x)|^2 dx \\
		&\quad	= \int_\R \sd^{-1/2+\de} a(D)\varrho (\ta,x) \cdot \sd^{1/2-\de} \varrho(\ta,x) dx
		        = \int_\R \frac{a(\xi)}{\lxr^{1-\de}} \lxr^{1-\de}|\wh{\varrho}(\ta,\xi)|^2 d\xi \in \R,
	\end{align}
	which implies $\|u(t)\|_{L^2_x}^2 = \|\phi\|_{L^2_x}^2$ for all $t \in [0,T]$.
\end{proof}

\subsection{Global well-posedness}
In this section, we assume $a$ is real-valued and extend the solution that we obtained in Proposition \ref{prop:LWP}.
Since our notion of solution is defined through the density and the propagator $S_{a(D)\varrho}$, it is not obvious whether we can extend a solution. The next lemma shows that solutions on adjacent intervals are compatible, thanks to the semigroup property (Lemma \ref{lem:basic property of UV norm}).
\begin{lemma}\label{lem:extension}
	Let $0\le t_0 <t_1<t_2<\I$.
	Let $u_0(t)=S_{a(D)\varrho_0}(t,t_0)\phi \in C([t_0,t_1];L^2_x(\R))$ and $u_1(t) = S_{a(D)\varrho_1}(t,t_1)\ps \in C([t_1,t_2];L^2_x(\R))$ be two solutions to \eqref{eq:NLS}.
	If $u_0(t_1) = u_1(t_1) = \ps$, then $u(t)\in C([t_0,t_2];L^2_x(\R))$ defined by
	\begin{equation}
		u(t) = \begin{dcases}
			u_0(t) \quad&\text{if } t_0 \le t \le t_1,\\
			u_1(t) \quad&\text{if } t_1 \le t \le t_2
		\end{dcases}
	\end{equation}
	is also a solution to \eqref{eq:NLS}.
\end{lemma}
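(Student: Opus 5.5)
We glue the densities and invoke the semigroup property of Lemma \ref{lem:basic property of UV norm}. Define
\begin{equation}
	\varrho(t) := \begin{dcases}
		\varrho_0(t) \quad&\text{if } t_0 \le t < t_1,\\
		\varrho_1(t) \quad&\text{if } t_1 \le t \le t_2.
	\end{dcases}
\end{equation}
Since $\varrho_0\in L^2_t([t_0,t_1];H^{1/2}_x)$ and $\varrho_1\in L^2_t([t_1,t_2];H^{1/2}_x)$, we have $\varrho\in L^2_t([t_0,t_2];H^{1/2}_x)$, and $V:=a(D)\varrho\in L^2_t([t_0,t_2];H^{-1/2+\de}_x)$ by \eqref{eq:assmption for m}. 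Hence $S_V(t,s)$ is defined for $t_0\le s\le t\le t_2$ by Definition \ref{def:UV unconditional}. By Definition \ref{def:solution}, it then suffices to show that $u(t)=S_V(t,t_0)\phi$ for all $t\in[t_0,t_2]$ and that $\varrho(t)=|S_V(t,t_0)\phi|^2$ in $L^2_t([t_0,t_2];H^{1/2}_x)$. The second claim follows from the first together with $\varrho_i=|u_i|^2$.

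The key observation is a \emph{locality} property: $S_V(t,s)$ depends only on the restriction of $V$ to $[s,t]$. I would check this directly from the construction. The Dyson series in \eqref{eq:UV definition} involves only $V(\ta)$ for $\ta\in[s,t]$, and the partitioned product \eqref{eq:UV definition unconditional} likewise uses only values of $V$ in $[s,t]$; the choice of partition is irrelevant by the well-definedness remark. Consequently, $S_V(t,s)=S_{a(D)\varrho_0}(t,s)$ for $t_0\le s\le t\le t_1$ and $S_V(t,s)=S_{a(D)\varrho_1}(t,s)$ for $t_1\le s\le t\le t_2$. A single point, where the two densities may disagree, has measure zero, so it does not affect $V$ as an element of $L^2_t$.

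For $t\in[t_0,t_1]$ we then have $u(t)=u_0(t)=S_V(t,t_0)\phi$. For $t\in[t_1,t_2]$, Lemma \ref{lem:basic property of UV norm}(i) gives
\begin{equation}
	S_V(t,t_0)\phi = S_V(t,t_1)S_V(t_1,t_0)\phi = S_{a(D)\varrho_1}(t,t_1)u_0(t_1) = S_{a(D)\varrho_1}(t,t_1)\ps = u_1(t) = u(t).
\end{equation}
Continuity of $u$ on $[t_0,t_2]$ follows either from the strong continuity in Lemma \ref{lem:basic property of UV norm}(ii) or directly from the matching condition at $t_1$. Finally, $|S_V(t,t_0)\phi|^2=|u_i(t)|^2=\varrho_i(t)=\varrho(t)$ on each piece, so the density equation holds on $[t_0,t_2]$.

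The main, if mild, obstacle is justifying the locality of $S_V$, since $S_V$ is defined through series and partitions rather than through an equation. I would handle it by noting that every ingredient (the operators $W^{(n)}_V(t,s)$ and the smallness conditions on subintervals) refers only to $V|_{[s,t]}$. The rest is bookkeeping.
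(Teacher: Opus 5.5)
Your proposal is correct and follows essentially the same route as the paper. You glue the densities, use that $S_{a(D)\varrho}$ agrees with $S_{a(D)\varrho_i}$ on each subinterval, and apply the semigroup property of Lemma \ref{lem:basic property of UV norm}(i) at $t_1$. Your explicit justification of the locality of $S_V$, via the Dyson series and the partitioned product, fills in a step the paper simply asserts.
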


\begin{proof}
	Define $\varrho \in L^2_t([t_0,t_2];H^{1/2}_x)$ by
	\begin{equation}
		\varrho(t) := \begin{dcases}
			\varrho_0(t) \quad&\text{if } t_0 \le t \le t_1,\\
			\varrho_1(t) \quad&\text{if } t_1 \le t \le t_2.
		\end{dcases}
	\end{equation}
	Then, we have $S_{a(D)\varrho}(t,t_0) = S_{a(D)\varrho_0}(t,t_0)$ for all $t_0\le t\le t_1$
	and $S_{a(D)\varrho}(t,t_1) = S_{a(D)\varrho_1}(t,t_1)$ for all $t_1\le t\le t_2$.
	On the one hand, we have
	$$\varrho(t) = \varrho_0(t) =  |S_{a(D)\varrho_0}(t,t_0)\phi|^2 = |S_{a(D)\varrho}(t,t_0)\phi|^2 \quad \text{in } L^2_t([t_0,t_1];H^{1/2}_x).$$
	On the other hand, by Lemma \ref{lem:basic property of UV norm}, we have
	\begin{align}
	\varrho(t) &= \varrho_1(t) =  |S_{a(D)\varrho_1}(t,t_1)\ps|^2 = |S_{a(D)\varrho_1}(t,t_1)S_{a(D)\varrho_0}(t_1,t_0)\phi|^2 \\
	&= |S_{a(D)\varrho}(t,t_1)S_{a(D)\varrho}(t_1,t_0)\phi|^2 = |S_{a(D)\varrho}(t,t_0)\phi|^2 \quad \text{in } L^2_t([t_1,t_2];H^{1/2}_x).
	\end{align}
	Therefore, $\varrho\in L^2_t([t_0,t_2];H^{1/2}_x)$ satisfies
	$$\varrho = |S_{a(D)\varrho}(t,t_0)\phi|^2$$
	and $u'(t):= S_{a(D) \varrho}(t,t_0)\phi \in C([t_0,t_2];L^2_x(\R))$ is also a solution to \eqref{eq:NLS} with the initial condition $u(t_0)=\phi$.
	Finally, we have 
	$$u'(t)=S_{a(D)\varrho}(t,t_0)\phi = S_{a(D)\varrho_0}(t,t_0)\phi=u_0(t)=u(t)$$
	for all $t\in [t_0,t_1]$, and 
	$$u'(t)=S_{a(D)\varrho}(t,t_0)\phi = S_{a(D)\varrho_1}(t,t_1)S_{a(D)\varrho_0}(t_1,t_0)\phi=u_1(t) = u(t)$$
	for all $t\in [t_1,t_2]$.
	Therefore, we complete the proof.
\end{proof}

Now, we give a proof of Theorem \ref{th:main}.
\begin{proof}
Let $\phi \in L^2_x$.
By Proposition \ref{prop:LWP}, we have $T=T(a,\de,\|\phi\|_{L^2_x(\R)})>0$ and a unique solution $u(t)\in C([0,T];L^2_x)$, which satisfies
$\|u(t)\|_{L^2_x} = \|\phi\|_{L^2_x}$ for all $t\in[0,T]$.
Applying the same argument in the proof of Proposition \ref{prop:LWP}, we obtain a unique solution $v(t)\in C([T,2T];L^2_x)$ such that $v(T)=u(T)$, which satisfies $\|v(t)\|_{L^2_x} = \|\phi\|_{L^2_x}$ for all $t\in [T,2T]$.
Therefore, by Lemma \ref{lem:extension}, we obtain a unique extended solution $u(t) \in C([0,2T];L^2_x)$.
Repeating the same argument, we obtain the desired global-in-time solution $u(t)\in C([0,\I);L^2_x)$.
\end{proof}

\appendix

\section{Christ--Kiselev type lemmas}
In this section, we present the Christ--Kiselev type lemmas.
First, recall the classical Christ--Kiselev lemma \cite{Christ Kiselev 2001}.
\begin{lemma}[Christ--Kiselev]\label{lem:CK 0}
	Let $X,Y$ be Banach spaces.
	Let $-\I \le a < b \le \I$.
	Define linear operators $T,T_<$ by
	\begin{align}
		T[g](t):= \int_a^b K(t,s)g(s)ds, \quad T_< [g](t) := \int_a^t K(t,s)g(s)ds.
	\end{align}
	If $p<q$, then we have
	\begin{align}
		&\|T[g]\|_{L^q_t((a,b);Y)} \le C \|g\|_{L^p_t((a,b);X)}
		\implies \|T_<[g]\|_{L^q_t((a,b);Y)} \le C C_{p,q} \|g\|_{L^p_t((a,b);X)}.
	\end{align}
\end{lemma}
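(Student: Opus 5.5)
This is the classical Christ--Kiselev lemma, and I would prove it by the standard dyadic Whitney decomposition of the triangle $\{s<t\}$. First I reduce to the case where $g$ is nice enough that $\|g\|_{L^p_t((a,b);X)}^p$ is finite and nonzero. Then I normalize $\|g\|_{L^p}=1$ and define the distribution function
\[
F(t):=\int_a^t \|g(s)\|_X^p\,ds ,
\]
which is continuous and nondecreasing from $0$ to $1$.

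At each dyadic scale $j\ge 1$, I partition $(a,b)$ into $2^j$ consecutive intervals $I_{j,k}=F^{-1}([k2^{-j},(k+1)2^{-j}))$, so that $\|g\II_{I_{j,k}}\|_{L^p}^p = 2^{-j}$. The region $\{(t,s): s<t\}$ is then, up to a null set, the disjoint union over $j\ge1$ and $k$ of the products $I_{j,2k+1}\times I_{j,2k}$, i.e. (time of $t$ in the right dyadic child) $\times$ (time of $s$ in the left sibling). Hence
\[
T_<[g](t)=\sum_{j\ge1}\sum_k \II_{I_{j,2k+1}}(t)\, T[g\II_{I_{j,2k}}](t).
\]

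For fixed $j$, the functions in $t$ have disjoint supports. So using the hypothesis on each piece,
\begin{align}
\Big\|\sum_k \II_{I_{j,2k+1}}T[g\II_{I_{j,2k}}]\Big\|_{L^q}
&=\Big(\sum_k \|\II_{I_{j,2k+1}}T[g\II_{I_{j,2k}}]\|_{L^q}^q\Big)^{1/q}\\
&\le C\Big(\sum_k 2^{-jq/p}\Big)^{1/q}\le C\,2^{j(1/q-1/p)} .
\end{align}
Summing over $j\ge1$ by the triangle inequality gives a geometric series, which converges precisely because $p<q$. This yields $C_{p,q}=\sum_{j\ge1}2^{-j(1/p-1/q)}\lesssim (1-2^{-(1/p-1/q)})^{-1}$.

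The main technical point is justifying the decomposition identity. First, the sum over $j$ must converge. Second, the diagonal and the boundaries (including the case when $F$ has flat parts, where $F^{-1}$ intervals may be degenerate) must carry no mass. I would handle this by working with $g$ in a dense class (say simple functions with $\|g(s)\|_X>0$ on the support, or $g\in L^1\cap L^p$). For such $g$ the series converges absolutely in $L^q$ by the bound above, and for a.e. $t$ the partial sums equal $\int_{\{s: s<t,\ \text{$s,t$ separated at level}\le J\}}K(t,s)g(s)\,ds$, which tends to $T_<[g](t)$ by dominated convergence since the excluded set shrinks to the diagonal. The general case then follows by density, extending $T_<$ by continuity.

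The same argument with $s>t$ (or applied to the kernel with the integral from $t$ to $b$) gives the mirrored statement if needed in Lemma~\ref{lem:CK}.
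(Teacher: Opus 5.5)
Your argument is correct in substance and is the standard Christ--Kiselev proof. The paper does not reprove this lemma; it only cites Christ and Kiselev. Its proof of the dual Lemma~\ref{lem:CK}, however, uses exactly your dyadic Whitney decomposition of $\{s<t\}$ via a distribution function. Your disjoint-support $\ell^q$ summation is the extra step the non-dual case needs, and you carry it out correctly.

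One small slip needs fixing. With half-open intervals $[k2^{-j},(k+1)2^{-j})$, your pieces do not cover $\{t:F(t)=1\}$, and there $T_<[g]=T[g]$ need not vanish. Also, the region excluded at level $J$ shrinks to $\{s<t:F(s)=F(t)\}$, not to the diagonal; this is harmless only because $g=0$ a.e.\ there.

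The device in the paper's proof of Lemma~\ref{lem:CK} removes both issues at once: replace $F$ by $\tfrac12(\varphi+F)$ with $\varphi:(a,b)\to(0,1)$ continuous and strictly increasing. This makes $F$ a bijection onto $(0,1)$ and the identity $\{s<t\}=\{F(s)<F(t)\}$ exact. The only cost is the weaker bound $\|g\II_{F^{-1}(I)}\|_{L^p}^p\le 2|I|$, and no density argument is then needed.
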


In this paper, we often use the dual version of Lemma \ref{lem:CK 0}.
Its proof is well-known, but we give it below for convenience.
\begin{lemma}\label{lem:CK}
	Let $-\I\le a<b\le \I$.
	Let $X,Y,Z$ be Banach spaces. Assume that, for each $t,s\in(a,b)$, $K(t,s;f,g)$ is bilinear with respect to $(f,g)$.
	Define bilinear operators $B,B_<$ by
	\begin{align}
		&B[f,g]= \int_a^b dt \int_a^b ds K(t,s;f(t),g(s)),\\
		&B_< [f,g] = \int_a^b dt \int_a^{t} ds K(t,s;f(t),g(s)).
	\end{align}
	If $1/p + 1/q >1$, then we have
	\begin{align}
		&\|B[f,g]\|_Z \le C \|f\|_{L^{q}_t((a,b);Y)} \|g\|_{L^{p}_t((a,b);X)}
		 \implies \|B_<[f,g]\|_Z \le C C_{p,q} \|f\|_{L^{q}_t((a,b);Y)} \|g\|_{L^{p}_t((a,b);X)}.
	\end{align}
\end{lemma}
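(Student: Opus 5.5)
The idea is to reduce Lemma \ref{lem:CK} to the classical Christ--Kiselev lemma (Lemma \ref{lem:CK 0}) by duality. First fix $f$ and regard the bilinear form as a linear operator in $g$. Concretely, for fixed $g$ define the $Z$-valued, linear-in-$f$ quantity $B_<[f,g]$. Choose a norming functional $\Lambda\in Z^*$ with $\|\Lambda\|\le 1$ and $\Lambda(B_<[f,g])=\|B_<[f,g]\|_Z$ (up to $\varepsilon$). Then we must bound the scalar bilinear form
\[
\Lambda B_<[f,g]=\int_a^b dt\int_a^t ds\, \Lambda K(t,s;f(t),g(s)).
\]

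Next, set up an operator to which Lemma \ref{lem:CK 0} applies. Define the kernel $\widetilde K(t,s)$ acting on $g(s)\in X$ with values in $Y^*$ by $\langle \widetilde K(t,s)x, y\rangle := \Lambda K(t,s;y,x)$. Put $T[g](t)=\int_a^b \widetilde K(t,s)g(s)\,ds$ and $T_<[g](t)=\int_a^t\widetilde K(t,s)g(s)\,ds$. By duality between $L^q_t(Y)$ and $L^{q'}_t(Y^*)$ (norming, which suffices for the upper bound), the hypothesis gives
\[
\|T[g]\|_{L^{q'}_t((a,b);Y^*)}=\sup_{\|f\|_{L^q_t Y}\le1}\Big|\int_a^b\langle T[g](t),f(t)\rangle dt\Big|\le C\|g\|_{L^p_t X}.
\]
The condition $1/p+1/q>1$ is exactly $p<q'$, since $1/p>1-1/q=1/q'$. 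Lemma \ref{lem:CK 0} then yields $\|T_<[g]\|_{L^{q'}_t Y^*}\le CC_{p,q}\|g\|_{L^p_tX}$. Pairing with $f$ and using Hölder gives $|\Lambda B_<[f,g]|\le CC_{p,q}\|f\|_{L^q_tY}\|g\|_{L^p_tX}$. Taking the supremum over $\Lambda$ finishes the argument.

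The main obstacle is technical rather than conceptual. The issue is measurability and integrability of the kernel $K(t,s;\cdot,\cdot)$, and the fact that $L^{q'}(Y^*)$ need not be the full dual of $L^q(Y)$. I would handle both by first working with $f,g$ simple functions, where all integrals are finite sums of well-defined quantities and the bilinear form is clearly defined. In this setting, the $L^{q'}(Y^*)$ norm can be computed by testing against simple $f$. The general case then follows by density, since the right-hand side is continuous in $f,g$ and $B_<$ is defined by that extension. The Christ--Kiselev proof itself, via Whitney-type dyadic decomposition of $\{s<t\}$, only uses the bound on $T$ restricted to indicator-truncated inputs, so it applies verbatim in this vector-valued setting.
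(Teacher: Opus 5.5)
Your argument is a valid alternative in outline, but it is not the route the paper takes.

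The paper never passes through Lemma \ref{lem:CK 0}. Its steps are:
\begin{itemize}
\item it normalizes $\|f\|_{L^q_tY}^q=\|g\|_{L^p_tX}^p=\tfrac12$;
\item it reparametrizes time by the bijection $F(t)=\tfrac12\{\varphi(t)+\|f\|^q_{L^q((a,t);Y)}+\|g\|^p_{L^p((a,t);X)}\}$, so that the $L^q$ mass of $f$ and the $L^p$ mass of $g$ on $F^{-1}(I)$ are both controlled by $|I|$;
\item it splits $\mathbbm{1}_{(s<t)}=\sum_{I\sim J}\mathbbm{1}_{F^{-1}(I)}(t)\mathbbm{1}_{F^{-1}(J)}(s)$ by a Whitney decomposition;
\item it applies the hypothesis on $B$ to each truncated pair and sums $|I|^{1/q}|J|^{1/p}$ over the $O(2^k)$ pairs at scale $2^{-k}$, using the triangle inequality in $Z$, which gives $\sum_k 2^{-k(1/p+1/q-1)}$.
\end{itemize}

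In your version the same decay $2^{k(1/q'-1/p)}$ comes from the $\ell^{q'}$ decoupling of disjoint time pieces inside the linear Christ--Kiselev proof, and the condition appears as $p<q'$. This makes the exponent condition transparent and lets you use Lemma \ref{lem:CK 0} as a black box. What the paper's direct argument buys is that it uses nothing beyond bilinearity and the hypothesis on indicator-truncated inputs. It needs no dual space, no $Y^*$-valued kernel, and raises no measurability or representation issues.

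Your construction does need more than the statement provides. You need $y\mapsto\Lambda K(t,s;y,x)$ to be bounded on $Y$ for fixed $(t,s)$, and $T[g]$ to be strongly measurable in $Y^*$. In the paper's applications, $K(t,s;y,x)=S(t)^*yS(t)S(s)^*xS(s)$ with $y,x\in H^{-1/2+\delta}$, and this pointwise kernel is not even a bounded operator on $L^2$. Passing to simple $f,g$ does not by itself produce a $Y^*$-valued kernel.

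The clean repair is to replace the $L^{q'}_tY^*$ norm by the dual norm of $f\mapsto\Lambda B[f,g]$ on $L^q_tY$. You then check that the disjoint-in-time decoupling still holds for this norm, which follows from H\"older in $\ell^q$. At that point you are essentially running the paper's direct argument.
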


\begin{proof}[Proof of Lemma \ref{lem:CK}]
We will follow \cite[Proof of Lemma 3.1]{Tao 2000}.
By the bilinearity, it suffices to prove
\begin{equation}
 \|B_<[f,g]\|_{Z} \le C C_{p,q}<\I	\text{ when } \|f\|_{L^{q}_t((a,b);Y)}^q = \|g\|_{L^{p}_t((a,b);X)}^p = \tw.
\end{equation}
Let $\ph:(a,b) \to (0,1)$ be a strictly increasing function such that $\ph(a+) = 0$ and $\ph(b-)=1$.
Define a strictly increasing bijection $F:(a,b) \to (0,1)$ by
\begin{align}
	F(t) := \frac{1}{2}\Ck{\ph(t) + \|f\|_{L^{q}((a,t);Y)}^q + \|g\|_{L^{p}((a,t);X)}^p}.
\end{align}
For dyadic intervals $I,J\subset[0, 1]$, we write $I\sim J$ when $I$ and $J$ have the same length, are not adjacent but have adjacent parents, and $I$ is on the right of $J$.
Then, we have
\begin{equation}
	\II_{(s<t)} = \sum_{I\sim J} \II_{F^{-1}(I)}(t) \II_{F^{-1}(J)}(s).
\end{equation}
Since
\begin{align}
	B_<[f,g]&= \int_a^b dt \int_a^{t} ds K(t,s;f(t),g(s)) \\
	&= \sum_{I\sim J} \int_a^b dt \int_a^b ds K(t,s;\II_{F^{-1}(I)}(t)f(t),\II_{F^{-1}(J)}(s)g(s)),
\end{align}
we obtain
\begin{align}
	\|B_<[f,g]\|_{Z}
	&\le \sum_{I\sim J} \|B(\II_{F^{-1}(I)}(t) f, \II_{F^{-1}(J)}(s) g)\|_Z \\
	&\le C \sum_{I\sim J}  \|\II_{F^{-1}(I)}(t)f\|_{L^{q}_t((a,b);Y)}
	       \|\II_{F^{-1}(J)}(s)g\|_{L^{p}_t((a,b);X)}.
\end{align}
Changing variables, we obtain
\begin{equation}
	\|\II_{F^{-1}(I)}(t)f\|_{L^{q}_t((a,b);Y)} \ls |I|^{1/q}, 
	\quad \|\II_{F^{-1}(J)}(t) g\|_{L^{p}_t((a,b);X)} \ls |J|^{1/p}.
\end{equation}
Thus, we have
\begin{align}
	\|B_<[f,g]\|_{Z} &\le C \sum_{I\sim J} |I|^{1/q}|J|^{1/p} \le C\sum_{k=1}^\I \sum_{I\sim J, \, |I|=2^{-k}} |I|^{1/q}|J|^{1/p}\\
	&\le C \sum_{k=1}^\I 2^{-k(1/p+1/q-1)} = C C_{p,q} < \I.
\end{align}
where we used $1/p+1/q-1 >0$ by the assumption.
\end{proof}

\end{document}